\newcommand{\cT}{\mathcal{T}}
\newcommand{\Ba}{\partial_t^{1-\alpha}}
\newcommand{\Bac}{{^C}\partial_t^{\alpha}}
\newcommand{\I}{\mathcal{I}}
\begin{document}

\title{Optimal error analysis of a FEM for fractional diffusion problems by energy arguments
\thanks{ The valuable comments of  the referees improved the paper. The support of  the Science Technology Unit at KFUPM through  King Abdulaziz City for Science and Technology (KACST) under National Science, Technology and Innovation Plan (NSTIP) project No. 13-MAT1847-04  is gratefully acknowledged.}}
\titlerunning{FEM for fractional diffusion models}        

\author{Samir Karaa       \and Kassem Mustapha \and         Amiya K. Pani}

\institute{Samir Karaa \at Department of Mathematics and Statistics, Sultan Qaboos University, Al-Khod 123, Muscat, Oman,\email{skaraa@squ.edu.om} \and
Kassem  Mustapha \at Department of Mathematics and Statistics, King Fahd University of Petroleum and Minerals, Dhahran, 31261, Saudi Arabia, \email{kassem@kfupm.edu.sa} \and Amiya K. Pani \at Department of Mathematics, Institute of Technology Bombay, Powai, Mumbai-400076, India, \email{akp@math.iitb.ac.in}}

\date{Received: \today / Accepted: date}
\maketitle

\begin{abstract}
{In this article, the  piecewise-linear finite element method (FEM)  is  applied to approximate the solution of  time-fractional diffusion equations  on bounded convex  domains. Standard energy arguments do not provide satisfactory results for such a problem due to the low regularity of its exact solution.
Using a delicate energy analysis,  {\it a priori} optimal error bounds in $L^2(\Omega)$-, $H^1(\Omega)$-norms, and a quasi-optimal bound  in  $L^{\infty}(\Omega)$-norm are derived for the   semidiscrete FEM  for cases  with smooth and nonsmooth initial data. The main tool  of our analysis is based on a  repeated use of an integral operator and use of a $t^m$ type of weights to take care of the singular behavior of the continuous solution at $t=0.$ The generalized Leibniz formula for fractional derivatives is found to play a key role in our analysis. Numerical experiments are presented  to illustrate some of the theoretical results.}
\keywords{Fractional diffusion equation \and Finite elements \and Energy argument error analysis \and Nonsmooth data.}
\end{abstract}
\section{Introduction}
\label{intro}
In this paper, we investigate the error analysis via energy arguments of a semidiscrete Galerkin finite element method (FEM)  for  time-fractional diffusion problems of the form: find $u=u(x,t)$ such that
\begin{equation}\label{a}
u'(x,t) + \Ba {\cal{L}} u(x,t)
=f(x,t) \quad\mbox{ in }\Omega\times (0,T],\end{equation}
with $u(x,0)=u_0(x)$ in $\Omega$, subject to homogeneous Dirichlet boundary conditions, that is, $u(x,t)= 0$ on $\partial\Omega\times (0,T]$. Here ${\cal{L}} u= -{\rm div}({\bf a}(x)\nabla u)$, $\Omega$ is a bounded, convex polygonal domain in  $\mathbb{R}^2$  with boundary $\partial \Omega$, $f$,  ${\bf a}$ and  $u_0$ are given functions defined on their respective domains. In \eqref{a}, $u'$ is the time partial derivative of $u$  and $\Ba:={^R}{\rm D}^{1-\alpha}$ is the Riemann--Liouville time-fractional derivative  defined by: for $0<\alpha<1$,
\begin{equation} \label{Ba}
\Ba \varphi(t):=\frac{\partial }{\partial t}\I^{\alpha}\varphi(t):=\frac{\partial }{\partial t}\int_0^t\omega_{\alpha}(t-s)\varphi(s)\,ds\quad\text{with} \quad \omega_{\alpha}(t):=\frac{t^{\alpha-1}}{\Gamma(\alpha)},
\end{equation}
($\I^{\alpha}$ is the Riemann--Liouville time-fractional integral).  We assume that the source term $f$ and the diffusivity coefficient function  ${\bf a}$ are  sufficiently regular and
\begin{equation}\label{eq: A positive}
0<a_{\min} \le  {\bf a}(x) \le  a_{\max}<\infty\quad { \rm on
} ~~\overline \Omega.\end{equation}

  Several numerical techniques  for the problem \eqref{a} (with constant diffusivity coefficient) in one and several space variables have been proposed with various types of spatial discretizations  including finite difference, finite volume   or spectral element methods, see \cite{CockburnMustapha2015,KMP2017,JLPZ2015}.  For the time discretization,  different time-stepping schemes (implicit and explicit) have been investigated including finite difference, convolution quadrature,  and discontinuous Galerkin methods, see \cite{ChenLiuAnhTurner2012,CuestaLubichPalencia2006,Cui2012,Mustapha2011,RZ2013,ZS2011}. The error analyses  in most studies in the existing  literature typically assume that the solution $u$ of \eqref{a}   is sufficiently regular  including at $t = 0$, which is not practically the case, see \cite{Mclean2010}. Indeed, assuming high  regularity on $u$ imposes additional compatibility conditions on the given data, which are not reasonable in many  cases.

Though the numerical approximation of the solution $u$ of  \eqref{a} was considered by many authors over the last decade, the optimality of the estimates with respect to the solution smoothness expressed through the problem data, $f$ and $u_0$, was considered in a few papers for the case of constant diffusivity and quasi-uniform meshes. Obtaining sharp error bounds under reasonable regularity assumptions on $u$ has proved challenging.  The first optimal $L^2(\Omega)$-error estimate for the  Galerkin finite  element solution of \eqref{a} with respect to the regularity of initial data was established \cite{MT2010a}. More precisely, for $t\in(0,T]$, convergence rates of order $h^2 t^{\alpha(\delta-2)/2} $ ($h$ denoting the maximum diameter of the spatial mesh elements) were proved assuming that the initial data  $u_0 \in \dot H^\delta(\Omega)$ for $\delta=0,\,2$ (see, Section \ref{sec:WRT} for the definition of these spaces). The proof was based on some refined estimates of the Laplace transform in time for the error.  In \cite{MT2010b}, by using a similar approach, the same authors derived $O(h^2\ell_h^2t^{-\alpha(2-\delta)/2})$ convergence rates  in the stronger $L^\infty(\Omega)$-norm, where $\ell_h=\max\{1,|\log h|\}.$ For $\delta=0$,  $u_0$ was assumed to be in  $L^\infty(\Omega)$, while  for $\delta=2$, $u_0$ was assumed to be in  $C^2(\Omega)$ and vanishes on $\partial \Omega$. Recently, in \cite{McLeanMustapha2015},  the error analysis of a first order semidiscrete time-stepping scheme for problem \eqref{a} with $f\equiv 0$ allowing nonsmooth  $u_0,$ using discrete Laplace transform technique. Since standard energy arguments are used heavily in the error analysis of Galerkin FEMs for classical diffusion equations,  it is more pertinent to extend the analysis to these time-fractional order diffusion problems with a variable diffusivity. Since $t^m$ and $\Ba{\cal{L}} $ do not commute, extending these arguments  to problem \eqref{a} is not a straightforward task, especially in the case of nonsmooth $u_0$.

The main motivation of this work is to propose delicate energy arguments approach to derive optimal error estimates of the semidiscrete  Galerkin FEM  for the problem   \eqref{a} for both smooth and nonsmooth initial data $u_0$. Earlier,  for smooth $u_0$, a quasi-optimal error estimate of order $O(h^2 \ell_h)$ in $L^{\infty} (L^2)$-norm  was derived  in \cite{MustaphaMcLean2011} using direct energy arguments. The proposed technique in this work has several advantages over the approaches  used  in  \cite{MT2010a,MT2010b} to show optimal error bounds for smooth and nonsmooth $u_0$. Some of these are: (1) allowing variable coefficients, (2) the  source term $f$ can depend on the unknown solution $u$, that is, the fractional diffusion problem \eqref{a} is semilinear  (see Remark \ref{rem: semilinear}), (3) the quasi-uniform mesh assumption is not required to show the convergence results in $H^m$-norm for $m=0,1$, and (4) the proposed energy argument approach can be applied to other fractional diffusion problems.   For instance, the achieved error bounds in Theorems \ref{thm: smooth and nonsmooth}, \ref{thm: H1 bound} and \ref{thm: smooth and nonsmooth-5} can be extended  to the  time-fractional diffusion equation,
\begin{equation}\label{D}
\Bac u(x,t)  + {\cal{L}} u(x,t)= f(x,t)\quad {\rm for}~~0<\alpha<1,
\end{equation}
where $\Bac v(t):=\I^{1-\alpha}v'(t)$ is the  Caputo derivative. The error estimate in Theorem \ref{thm: smooth and nonsmooth} provides an improvement of the result obtained   in \cite[Theorem 3.7]{JLZ2013}. For constant diffusivity, using a semigroup approach and assuming  that the mesh is quasi-uniform, the derived error bound therein  involves a logarithmic factor.

 Outline of the paper. In Section \ref{sec:WRT}, we recall some smoothness properties of the solution $u$, we also state and derive some technical results.
In Section \ref{sec:Semi-discrete FE}, we introduce our semidiscrete finite element  scheme and recall some  error results. We claim that a direct application of energy arguments to problem \eqref{a} does not lead to optimal convergence rates even when the initial data $u_0 \in \dot H^2(\Omega)$.    In Section \ref{sec: LinftyL2}, for $t\in (0,T]$ and for $u_0 \in \dot H^\delta(\Omega)$,  an optimal error estimate in $L^2(\Omega)$-norm of order $h^2t^{-\alpha(2-\delta)/2}$ is established  when  $0\le \delta\le 2$, see Theorem \ref{thm: smooth and nonsmooth}. In Section \ref{sec: LinftyH1}, a superconvergence gradient  error bound is obtained, see Theorem \ref{sup-conv-1}. As a consequence, an optimal  $O(h\,t^{-\alpha(2-\delta)/2})$  estimate in the $H^1(\Omega)$-norm is derived for $0\le \delta\le 1,$ see Theorem \ref{thm: H1 bound}.   However, for $1<\delta\le 2,$ we showed an $O(h\,t^{-\alpha(1-\delta)/2}\max\{1,(h/t)^{-\alpha(1-\delta)/2}\})$ error estimate, reduces to $h\,t^{-\alpha(2-\delta)/2}$  for $t=O(h)$. Furthermore,
 assuming that $u_0 \in \dot H^\delta(\Omega)\cap L^\infty(\Omega)$ and the mesh is quasi-uniform,  a quasi-optimal error estimate of order $h^2\ell_h^{5/2}t^{-\alpha(3-\delta)/2}$ in  the stronger $L^\infty(\Omega)$-norm is proved in Theorem \ref{thm: smooth and nonsmooth-5}. Particularly relevant to this {\it a priori} error analysis is the appropriate use of several  properties of the time-fractional integral and derivative operators. Numerical tests are presented in Section \ref{sec: Numerical}  to confirm some of our theoretical findings.  Throughout the paper, $C$ is a generic positive constant that may depend on $\alpha$ and $T$, but is independent of  $h$.

\section {Regularity and technical results}\label{sec:WRT} Smoothness properties of the solution $u$ of the fractional diffusion problem \eqref{a} play a key role in the error analysis  of the Galerkin FEM, particularly, since $u$  has singularity near $t=0$, even for smooth given data. Below, we state the required regularity results for problem \eqref{a} in terms of the initial data $u_0$ and the source term $f$. For  $0\le r,\,\mu \le 2$,
\begin{equation}\label{eq: regularity property}
t^q\|u^{(q)}(t)\|_{r+\mu} \le  C(1+T^{\alpha\mu/2}) \,t^{-\alpha\mu/2}d_r(u_0,f),\quad  {\rm for}~~q\in \{0,1\},
\end{equation}
with $r+\mu\le 2$, where $d_r(u_0,f)=\|u_0\|_{r} +\sum_{m=0}^{q+1}\int_0^T s^m\|f^{(m)}(s)\|_{r}\,ds.$  Here, $\|\cdot\|_r$ denotes the norm on the Hilbert space $\dot H^r(\Omega)$ defined by $\|v\|_r^2  =\sum_{j=1}^\infty \lambda_j^r (v,\phi_j)^2,$ where $\{\lambda_j\}_{j=1}^\infty$ are the eigenvalues of the
elliptic operator ${\cal{L}}$ (subject to homogeneous Dirichlet boundary conditions) and $\{\phi_j\}_{j=1}^\infty$ are the associated orthonormal eigenfunctions. Noting that, $\dot H^r(\Omega)=H^r(\Omega)$ for $0\le r<1/2,$ $\dot H^r(\Omega)=\overline {C^\infty_0(\Omega)}$   in $H^r(\mathbb{R}^2)$ for $r=1/2,$ and for convex polygonal domains, $\dot H^r(\Omega)=\{w \in H^r(\Omega):  w=0~{\rm on}~\partial \Omega\}$ for $1/2< r \le 2,$ where $H^r(\Omega)$ is the standard Sobolev space with $H^0(\Omega)=L^2(\Omega)$.

For constant diffusivity, over the convex domain $\Omega$, the regularity property \eqref{eq: regularity property} follows by combining the results of Theorems 4.1, 4.2   and 5.6 in \cite{Mclean2010}. In the proof, it was used that the operator ${\cal L}$ (subject to homogeneous Dirichlet) is positive definite and possess a complete eigensystem. These properties remain valid if the diffusivity coefficient function ${\bf a}$ is sufficiently regular and  satisfies the positivity assumption \eqref{eq: A positive}.

Next, we state the positivity  properties of the fractional  operators  $\I^{\alpha}$ and $\Ba$, and derive some technical results that will be used in the subsequent sections. By \cite[Lemma 3.1 (ii)]{MustaphaSchoetzau2014},  and since the  bilinear form $A(\cdot, \cdot)$ associated with the operator ${\cal{L}}$ (that is, $A(v,w)=({\bf a}\nabla v, \nabla w)$) is symmetric positive definite on the Sobolev space $H_0^1(\Omega)$, it follows that for piecewise time continuous functions  $\varphi:[0,T] \to H_0^1(\Omega),$
\begin{equation}\label{eq: positive of Ia}
\int_0^TA(\I^\alpha\varphi,\varphi)\,dt\ge \cos(\alpha \pi/2)\int_0^T\|\sqrt{ \bf a}\,\nabla \I^{\alpha/2}\varphi\|^2\,dt \ge  0~~{\rm  for}~~0<\alpha<1,
\end{equation}
where  $\|\varphi\|:=\sqrt{(\varphi,\varphi)}$ denotes the $L^2(\Omega)$-norm. Furthermore, by \cite[Lemma A.1]{McLean2012},  the following holds: for
$W^{1,1}(0,T;H_0^1(\Omega))$,
\begin{equation}\label{eq: positive of Ba}
\int_0^TA(\Ba\varphi(t),\varphi(t))\,dt \ge \frac{1}{2}\sin(\alpha\pi/2) T^{\alpha-1} \int_0^T\| \sqrt{{\bf a}} \nabla \varphi(t)\|^2\,dt\,.
\end{equation}

The next lemma will be used frequently in our convergence analysis. In the proof, we use the following integral inequality:
if  for any $\tau\in(0,t)$,  $ |\phi(\tau)|^2\le |\phi(0)|^2+2\,\int_{0}^\tau|\phi(s)|\,|\psi(s)|\,ds, $ then $|\phi(t)|\le |\phi(0)|+\int_0^t\,|\psi(s)|\,ds$, see \cite[Lemma 4]{CockburnMustapha2015}.
\begin{lemma}\label{lem: reg use} Let $\kappa \in \{0,1\}$ and let ${\mathcal B}^\alpha=\Ba$ or ${\mathcal B}^\alpha=\I^\alpha.$   Assume that
\begin{equation} \label{eq: reg 1}
\kappa(v(t),\chi)+(1-\kappa)(v'(t),\chi)+A({\mathcal B}^\alpha v(t),\chi)= (w(t),\chi),\quad \forall~ \chi \in V_h,
\end{equation}
for  $t \in (0,T]$. Then
\[ \kappa\int_0^t \|v\|^2\,ds+(1-\kappa)\|v(t)\|^2\le
(1-\kappa)\Big(\|v(0)\|+\int_0^t \|w\|\,ds\Big)^2+\kappa\int_0^t \|w\|^2\,ds.\]
\end{lemma}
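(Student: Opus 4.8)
The plan is to treat the two admissible values of $\kappa$ separately; in each case the argument amounts to choosing $\chi=v$ (evaluated at a suitable time level) as the test function in \eqref{eq: reg 1} and then discarding the term containing the bilinear form $A(\cdot,\cdot)$ by invoking the positivity estimates \eqref{eq: positive of Ia} (when ${\mathcal B}^\alpha=\I^\alpha$) or \eqref{eq: positive of Ba} (when ${\mathcal B}^\alpha=\Ba$). Since \eqref{eq: reg 1} holds for every $\chi\in V_h$, the function $v(t)$ lies in $V_h$ and these choices are admissible; moreover, both positivity estimates, though written over $(0,T)$, remain valid with $T$ replaced by any $t\in(0,T]$ because $\I^\alpha$ and $\Ba$ are causal Volterra operators, so that $A({\mathcal B}^\alpha v,v)$ on $(0,t)$ depends only on $v|_{(0,t)}$.

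For $\kappa=1$, taking $\chi=v(t)$ in \eqref{eq: reg 1} gives $\|v(t)\|^2+A({\mathcal B}^\alpha v(t),v(t))=(w(t),v(t))$. Integrating over $(0,t)$, using $\int_0^tA({\mathcal B}^\alpha v,v)\,ds\ge0$, and applying the Cauchy--Schwarz inequality first in $\Omega$ and then in time produces
\[
\int_0^t\|v\|^2\,ds\le\Big(\int_0^t\|w\|^2\,ds\Big)^{1/2}\Big(\int_0^t\|v\|^2\,ds\Big)^{1/2};
\]
cancelling a common factor (the bound being trivial if $\int_0^t\|v\|^2\,ds=0$) yields $\int_0^t\|v\|^2\,ds\le\int_0^t\|w\|^2\,ds$, which is exactly the asserted estimate in this case.

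For $\kappa=0$, I would take $\chi=v(s)$ in \eqref{eq: reg 1} and use $(v'(s),v(s))=\tfrac12\tfrac{d}{ds}\|v(s)\|^2$; integrating over $(0,\tau)$ for an arbitrary $\tau\in(0,t)$, again discarding $\int_0^\tau A({\mathcal B}^\alpha v,v)\,ds\ge0$ and using Cauchy--Schwarz in $\Omega$, one is left with
\[
\|v(\tau)\|^2\le\|v(0)\|^2+2\int_0^\tau\|w(s)\|\,\|v(s)\|\,ds .
\]
The elementary integral inequality recalled just before the statement of the lemma, applied with $\phi=\|v\|$ and $\psi=\|w\|$, then gives $\|v(t)\|\le\|v(0)\|+\int_0^t\|w\|\,ds$, and squaring completes the proof. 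The only step that requires any real care is checking the hypotheses behind \eqref{eq: positive of Ia} and \eqref{eq: positive of Ba} --- piecewise continuity in time, respectively membership in $W^{1,1}(0,T;H_0^1(\Omega))$, with values in $H_0^1(\Omega)$ --- which are satisfied by the discrete-in-space functions $v$ to which the lemma will be applied; the remaining manipulations are entirely routine.
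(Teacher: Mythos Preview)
Your proof is correct and follows essentially the same approach as the paper's own proof: choose $\chi=v$, integrate in time, discard the $A({\mathcal B}^\alpha v,v)$ term via the positivity properties \eqref{eq: positive of Ia}--\eqref{eq: positive of Ba}, and then handle the two cases separately. The only cosmetic difference is that for $\kappa=1$ the paper uses the pointwise inequality $2\|w\|\,\|v\|\le\|w\|^2+\|v\|^2$ before integrating, whereas you apply Cauchy--Schwarz in time; both yield the same conclusion.
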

\begin{proof} Choose $\chi=v$ in \eqref{eq: reg 1}, and then, integrate over the interval $(0,t)$ to obtain
\[ 2\kappa\int_0^t \|v\|^2\,ds+(1-\kappa)[\|v(t)\|^2-\|v(0)\|^2]+2\int_0^t A({\mathcal B}^\alpha v,v)ds=
2\int_0^t (w,v)\,ds.\]
By the  positivity properties   in \eqref{eq: positive of Ia} and in  \eqref{eq: positive of Ba},  $\int_0^t A({\mathcal B}^\alpha v,v)ds\ge 0$, and thus,
\[2\kappa\int_0^t \|v\|^2\,ds+(1-\kappa)\|v(t)\|^2\le (1-\kappa)\|v(0)\|^2+2\int_0^t \|w\| \,\|v\|\,ds.
\]
Therefore, for $\kappa=0$, an application of the integral inequality (stated above) yields the desired inequality. However, for $\kappa=1$, we use the inequality $2\|w\| \,\|v\|\le \|w\|^2+ \|v\|^2$ and the desired result follows.$\quad \Box$
\end{proof}

\section {Semi-discrete FEM} \label{sec:Semi-discrete FE} This section focuses on a semidiscrete Galerkin FEM for  problem \eqref{a}. To define the scheme, let  $\mathcal{T}_h$ be a family of regular triangulations (made of simplexes $K$) of the  domain $\overline{\Omega}$ and let $h=\max_{K\in \mathcal{T}_h}(\mbox{diam}K),$ where $h_{K}$ denotes the diameter  of the element  $K.$  Let $V_h \subset H^1_0(\Omega)$ denote the usual space of continuous, piecewise-linear functions on  $\mathcal{T}_h$ that vanish on $\partial \Omega$.

The weak formulation for problem  \eqref{a} is to find   $u:( 0,T]\longrightarrow H^1_0(\Omega)$ such that
\begin{equation} \label{weak}
(u',v )+ A(\Ba u,v )=  (f,v )\quad \forall v\in H^1_0(\Omega)
\end{equation}
with given  $ u(0)=u_0.$ Thus, the standard semidiscrete finite element  formulation for \eqref{a} is to seek  $u_h:(0,T]\longrightarrow V_h$ such that
\begin{equation} \label{semi}
(u_h',v_h)+ A(\Ba u_h,v_h)=  (f,v_h)\quad \forall v_h\in V_h
\end{equation}
with given  $u_h(0)\in V_h$ to be defined later.

To derive  {\it a priori} error estimates for the numerical scheme (\ref{semi}),  we split the error $e:=(u- R_h u)-( u_h-R_hu)=:\rho-\theta,$ where the Ritz projection $R_h : H_0^1(\Omega) \rightarrow V_h $ is defined by the following relation: $A(R_h v-v, \chi)= 0$ for all $\chi\in V_h.$ For $t\in (0,T]$, the projection errors $\rho(t)$ and $\rho'(t)$  satisfy the following estimates: for $j=0,1,$
\begin{equation}\label{rho-estimate}
\|\rho(t)\|_j\leq C h^{m-j} \|u(t)\|_m~~{\rm and}~~ \|\rho'(t)\|_j\leq C h^{m-j} \|u'(t)\|_m,~~{\rm for}~~m=1,2.
\end{equation}
Hence,  by using the regularity property in \eqref{eq: regularity property}, we observe
\begin{equation}\label{eq: rho}
\|\rho(t)\|+t\|\rho'(t)\| \le Ch^m t^{-\max\{0,\alpha(m-\delta)/2\}}d_\delta(u_0,f),\quad {\rm for}~~0\le \delta\le 2.
\end{equation}

Next, we show that a direct application of energy arguments to problem (\ref{a}) does not  yield satisfactory results due to the low regularity of the continuous solution. From (\ref{weak}) and (\ref{semi}), the error decomposition $e=\rho-\theta$, and the property of the elliptic projection, we obtain the  equation in $\theta$ as
\begin{equation} \label{sup-2}
(\theta',\chi)+A(\Ba\theta,\chi)=(\rho',\chi)\quad \forall~\chi \in V_h.
\end{equation}
Then, the following result  holds \cite[Theorem 4]{MustaphaMcLean2011}: for $t\in (0,T],$ we have
\begin{equation}\label{e1}
\|u(t)-u_h(t)\|\leq   \|\theta(0^+)\|+\int_0^t \|\rho'(s)\|\,ds+\|\rho(t)\|.
\end{equation}
Practically,  the  solution  $u$ has singularity near $t=0$. For instance,  if $f\equiv 0$ and $u_0 \in \dot H^2(\Omega)$, then  $\|u'(t)\|_m\le Ct^{\alpha(2-m)/2-1}\|u_0\|_2$ for $m=1,2$, see \cite{Mclean2010}. Hence, by (\ref{rho-estimate}),
\begin{align*}
\int_0^t \|\rho'(s)&\|\,ds \le Ch\int_0^\epsilon \|u'(s)\|_1\,ds+Ch^2\int_\epsilon^t \|u'(s)\|_2\,ds\\
&\le C\Big(h\int_0^\epsilon s^{\alpha/2-1}\,ds+h^2\int_\epsilon^t s^{-1}\,ds\Big)\|u_0\|_2 \le Ch^2\ell_h\|u_0\|_2,~{\rm  for}~\epsilon=h^{2/\alpha}.
 \end{align*}
This leads to a quasi-optimal $O(h^2\ell_h)$ convergence. To achieve an optimal $O(h^2)$  convergence,  a stronger regularity assumption on $u$ is required and that, in turn  imposes severe restrictions on the initial data $u_0$. Thus, the error  bound in \eqref{e1} is not sharp even for the case of smooth $u_0$, that is, $H^2$-regularity on $u_0$ is not sufficient to get an  optimal $O(h^2)$ convergence rate. Furthermore, it is clear that this upper bound is not suitable for the case of nonsmooth $u_0$. Therefore, we propose in the next section  an approach via delicate energy arguments that provides optimal error bounds for both cases:  smooth and nonsmooth $u_0$.

\begin{remark}\label{rem: semilinear}
Our forthcoming convergence analysis can be easily extended if the source term $f=f(x,t,u(x,t))$ in the problem \eqref{a}, assuming that $f$ is sufficiently regular in the three variables and satisfies that $|f(x,t,z_1)-f(x,t,z_2)|\le C|z_1-z_2|$ for $z_1,z_2 \in \mathbb{R}$ (that is, $f$ is  Lipschitz continuous in the third variable). Studying the regularity properties of the continuous solution $u$ remains an open problem in this case.

The spatial finite element  scheme for \eqref{a}  is: find   $u_h:(0,T]\longrightarrow V_h$ such that
\[(u_h',v_h)+ A(\Ba u_h,v_h)=  (f(u_h),v_h)\quad \forall ~v_h\in V_h\]
with given  $u_h(0)\in V_h$. Hence, instead of \eqref{sup-2}, we have
\[(\theta',\chi)+A(\Ba\theta,\chi)= (\rho'+[f(u_h)-f(u)],\chi)\quad \forall~\chi \in V_h.\]
We follow the proofs in Sections \ref{sec: LinftyL2} and \ref{sec: LinftyH1} step-by-step where the term $\rho(t)$ will be replaced with $\tilde \rho(t):=\rho(t)+\int_0^t[f(u_h)-f(u)]\,ds$ and the Lipschitz continuity property of $f$ will be used appropriately.$\quad \Box$
\end{remark}
\section {$L^2(\Omega)$-error estimates} \label{sec: LinftyL2}
For convenience, we introduce the  notations:
\[\Theta_i(t):=t^i\theta(t)\quad{\rm and}\quad  \dot \Theta_i(t):=t^i\theta'(t)~~{\rm for}~~i=1,\,2.\]
In the next lemma, based on the generalized Leibniz formula for fractional derivatives, we state and show some identities
for our subsequent use.
\begin{lemma}\label{Ia} For $0<\alpha<1$, the followings hold: \\
\mbox{(a)} $t\Ba\theta= \Ba\Theta_1-(1-\alpha)\I^\alpha \theta$,\\
\mbox{(b)} $t\I^\alpha\theta = \I^\alpha\Theta_1+\alpha\I^{1+\alpha}\theta$.
\end{lemma}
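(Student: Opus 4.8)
The plan is to derive both identities from the generalized Leibniz rule for the Riemann–Liouville operators applied to the product $t\cdot\theta(t)$, treating $t$ as one factor. Recall that the generalized Leibniz formula for the fractional integral $\I^\alpha$ states that for $\beta\in\mathbb{R}$,
\[
\I^\alpha\bigl(fg\bigr)(t)=\sum_{k\ge 0}\binom{\alpha}{k}\,\bigl(\I^{\alpha-k}f\bigr)(t)\,g^{(k)}(t)\cdot(-1)^k\,?
\]
— more precisely, I would use the version in which differentiating the smooth factor $g(t)=t$ terminates the series after two terms because $g''\equiv 0$. So the first step is to write $\I^\alpha(t\,\theta(t))$ as a two-term sum: the $k=0$ term $t\,\I^\alpha\theta$ and the $k=1$ term, which involves $\I^{\alpha+1}\theta$ times $g'(t)=1$ with the binomial coefficient $\binom{\alpha}{1}=\alpha$ (and the appropriate sign). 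This immediately yields identity (b): $\I^\alpha\Theta_1 = t\I^\alpha\theta - \alpha\,\I^{1+\alpha}\theta$, i.e. $t\I^\alpha\theta=\I^\alpha\Theta_1+\alpha\I^{1+\alpha}\theta$. I would verify the sign and the coefficient by a direct computation from the definition $\I^\alpha\varphi(t)=\int_0^t\omega_\alpha(t-s)\varphi(s)\,ds$, writing $s=t-(t-s)$ inside the integral for the factor $\Theta_1(s)=s\,\theta(s)$, which splits $\I^\alpha\Theta_1$ into $t\,\I^\alpha\theta$ minus $\int_0^t (t-s)\,\omega_\alpha(t-s)\,\theta(s)\,ds$; since $(t-s)\omega_\alpha(t-s)=\alpha\,\omega_{1+\alpha}(t-s)$ by the identity $\tau\,\tau^{\alpha-1}/\Gamma(\alpha)=\alpha\,\tau^{\alpha}/\Gamma(\alpha+1)$, the last integral is exactly $\alpha\,\I^{1+\alpha}\theta$. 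This elementary splitting is in fact cleaner than invoking the general Leibniz series, so I would present (b) this way.

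For part (a), I would apply $\partial_t$ to identity (b) after adjusting indices, using $\Ba=\partial_t\I^\alpha$. Starting from $\I^\alpha\Theta_1 = t\I^\alpha\theta-\alpha\I^{1+\alpha}\theta$, differentiate in $t$:
\[
\Ba\Theta_1=\partial_t\bigl(t\,\I^\alpha\theta\bigr)-\alpha\,\partial_t\I^{1+\alpha}\theta=\I^\alpha\theta+t\,\partial_t\I^\alpha\theta-\alpha\,\I^{\alpha}\theta=(1-\alpha)\I^\alpha\theta+t\,\Ba\theta,
\]
where I used $\partial_t\I^{1+\alpha}\theta=\I^\alpha\theta$ (since $\I^{1+\alpha}=\I^1\I^\alpha$ and $\partial_t\I^1=\mathrm{id}$) and $\partial_t\I^\alpha\theta=\Ba\theta$. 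Rearranging gives $t\,\Ba\theta=\Ba\Theta_1-(1-\alpha)\I^\alpha\theta$, which is (a).

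The only genuine subtlety — and the step I would be most careful about — is the interchange of $\partial_t$ with the integral operators and the boundary terms that this can produce; in particular one must check that $\I^{1+\alpha}\theta$ and $t\,\I^\alpha\theta$ are differentiable and that no extra term at $s=0$ appears, which holds because $\omega_{1+\alpha}(0)=0$ for $0<\alpha<1$ so the kernel vanishes at the endpoint. Provided $\theta$ is locally integrable (which it is, being piecewise continuous into $V_h$), all manipulations are justified, and the two identities follow. I would therefore organize the write-up as: (i) prove (b) by the kernel-splitting computation; (ii) differentiate to obtain (a); (iii) remark that (a) is precisely the generalized Leibniz formula for $\Ba$ applied to $t\cdot\theta$.
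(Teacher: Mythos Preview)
Your proof is correct but proceeds in the opposite order from the paper. The paper obtains (a) by citing the fractional Leibniz formula as a known fact, then derives the auxiliary identity $t\Ba\theta=\I^\alpha\dot\Theta_1+\alpha\I^\alpha\theta$ (using $\Ba\Theta_1=\I^\alpha\Theta_1'=\I^\alpha\theta+\I^\alpha\dot\Theta_1$), and finally substitutes $\I\theta$ for $\theta$ in that identity to recover (b). You instead prove (b) first by the elementary kernel-splitting $s=t-(t-s)$ together with $(t-s)\omega_\alpha(t-s)=\alpha\,\omega_{1+\alpha}(t-s)$, and then differentiate (b) in $t$ to obtain (a). Your route is fully self-contained and avoids invoking the Leibniz formula as an external result; the paper's route is slightly shorter if one is willing to take (a) as a standard identity. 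Both are valid, and your remark about the vanishing boundary contribution $\omega_{1+\alpha}(0)=0$ for $0<\alpha<1$ correctly handles the only delicate point in the differentiation step.
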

\begin{proof} The first identity follows from the fractional Leibniz formula. To show the second identity,  noting first  that $\Ba\Theta_1=\I^\alpha \Theta'_1=\I^\alpha\theta+\I^\alpha \dot \Theta_1.$ Hence, by (a),
\begin{equation}\label{k3a}
t\Ba \theta(t)= \I^\alpha \dot \Theta_1(t)+\alpha\I^\alpha\theta(t).
\end{equation}
Now, we replace $\theta$ by $\I\theta$ in (\ref{k3a}) to obtain the second identity in the lemma.$\quad \Box$
\end{proof}

Next, we derive an upper bound of $\Theta_1$. To do so, we let $u_h(0)=P_h u_0,$ where $P_h :L^2(\Omega)\rightarrow V_h$ denotes the $L^2$-projection defined by $(P_h v-v, \chi)= 0$ for all $\chi\in V_h.$
\begin{lemma}\label{lem: estimate of Theta in l2 norm}
Let $u_h(0)=P_h u_0.$ Then, we have
\[\int_0^t\|\Theta_1\|^2\,ds \le 3\int_0^t \Big( s^2\|\rho\|^2+ 2\big(\I\|\rho\|\big)^2\Big)\,ds,\quad {\rm for}~~t\in(0,T].\]
\end{lemma}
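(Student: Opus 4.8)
The plan is to test the error equation \eqref{sup-2} with a weighted version of $\theta$, exploiting the two identities in Lemma~\ref{Ia}(a) and the positivity property \eqref{eq: positive of Ba} of $\Ba$. Concretely, I would multiply \eqref{sup-2} by $t^2$ and rewrite $t^2(\theta',\chi)$ and $t^2 A(\Ba\theta,\chi)$ in terms of the weighted quantity $\Theta_1=t\theta$. Since $\Theta_1' = \theta + t\theta' = \theta + \dot\Theta_1$ we have $t\theta' = \Theta_1' - \theta$, hence $t^2\theta' = t\Theta_1' - t\theta = t\Theta_1' - \Theta_1 = t\Theta_1' - \Theta_1$; it is cleaner to note $\frac{d}{dt}\|\Theta_1\|^2 = 2(\Theta_1',\Theta_1)$ and $(\Theta_1',\Theta_1) = (t\theta'+\theta,\,t\theta) = t^2(\theta',\theta) + t\|\theta\|^2$. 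For the fractional term, Lemma~\ref{Ia}(a) gives $t\Ba\theta = \Ba\Theta_1 - (1-\alpha)\I^\alpha\theta$, so $t^2\,A(\Ba\theta,\chi) = t\,A(\Ba\Theta_1,\chi) - (1-\alpha)t\,A(\I^\alpha\theta,\chi)$. Choosing $\chi=\Theta_1$ (legitimate since $\Theta_1(t)\in V_h$ for $t>0$) and integrating in time, the leading term $\int_0^t s\,A(\Ba\Theta_1,\Theta_1)\,ds$ is nonnegative by \eqref{eq: positive of Ba} applied on each subinterval, or more directly since $sA(\Ba\Theta_1,\Theta_1)\ge 0$ pointwise after the substitution — this is exactly the kind of positivity Lemma~\ref{lem: reg use} is built on.

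The remaining terms must be controlled. The term $(1-\alpha)\int_0^t s\,A(\I^\alpha\theta,\Theta_1)\,ds = (1-\alpha)\int_0^t A(\I^\alpha\theta,\,s\Theta_1)\,ds$ — using $s\Theta_1 = \Theta_2$ and Lemma~\ref{Ia}(b) to relate $\I^\alpha\Theta_1$-type quantities — should be absorbed or rewritten so that it is again of a definite sign or bounded by the positive leading term; alternatively one works with $\I^\alpha$ directly via Lemma~\ref{lem: reg use} with ${\mathcal B}^\alpha=\I^\alpha$ and $\kappa=1$ after one application of the integral operator $\I$. Indeed, the cleanest route is probably: apply $\I$ to \eqref{sup-2} first, producing an equation of the form $(\I\theta)'=\theta$ paired with $A(\Ba\I\theta,\chi)=A(\I^\alpha\theta,\chi)$ (since $\Ba\I = \I^\alpha$), multiply by the weight, and invoke Lemma~\ref{lem: reg use} in the $\kappa=0$ case with $v$ a weighted combination of $\I\theta$ and $\theta$ and $w$ built from $\rho$ and $\I\rho$. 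The driving data on the right-hand side is $t^2(\rho',\Theta_1)$, which after an integration by parts in time becomes $-\int_0^t \frac{d}{ds}(s^2\Theta_1)\cdot\rho$-type terms, i.e. it is controlled by $\int_0^t(s^2\|\rho\|^2 + (\I\|\rho\|)^2)\,ds$ after Cauchy--Schwarz; the factors $3$ and $2$ in the stated bound are bookkeeping from these Cauchy--Schwarz and integration-by-parts steps together with the choice $u_h(0)=P_hu_0$, which forces $\theta(0)=\rho(0)$ in the $L^2$ sense and makes the weighted initial term $\Theta_1(0)=0$ vanish.

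I would organize the proof as: (i) record $\theta(0) = (P_h - R_h)u_0$, hence $\Theta_1(0)=0$; (ii) from \eqref{sup-2} derive the identity for $\Theta_1$ using Lemma~\ref{Ia}(a) (and, where $\I^\alpha$-terms survive, Lemma~\ref{Ia}(b)); (iii) test with $\Theta_1$, integrate, discard the nonnegative fractional contribution via \eqref{eq: positive of Ba}/\eqref{eq: positive of Ia}; (iv) move all $\rho$-dependence to the right, integrate by parts in $s$ to trade $\rho'$ for $\rho$ and $\I\rho$, and close with Cauchy--Schwarz and the elementary integral inequality quoted before Lemma~\ref{lem: reg use}. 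The main obstacle I anticipate is step (iii)–(iv): handling the lower-order fractional term $(1-\alpha)\I^\alpha\theta$ produced by the Leibniz formula without losing the weight's benefit — one wants it to either combine with $A(\Ba\Theta_1,\Theta_1)$ into something still nonnegative, or to be dominated by it, and getting the constants to collapse to exactly $3$ and $2$ will require a careful (but routine) tracking of the Cauchy--Schwarz splittings. A secondary subtlety is justifying the integrations by parts given the $t^{-1+\alpha/2}$-type singularity of $\theta$ at $t=0$; the weights $t$, $t^2$ are chosen precisely so that all boundary terms at $s=0$ vanish.
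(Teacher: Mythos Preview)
Your primary approach has a genuine gap: the claim that $\int_0^t s\,A(\Ba\Theta_1,\Theta_1)\,ds\ge 0$ (or that $sA(\Ba\Theta_1,\Theta_1)\ge 0$ pointwise) is not justified by \eqref{eq: positive of Ba}; that inequality is global and does not survive inserting the weight $s$. Moreover, working with $\Theta_1'$ and $\Ba$ puts you in the $\kappa=0$ case of Lemma~\ref{lem: reg use}, which would deliver $\|\Theta_1(t)\|$, not the quantity $\int_0^t\|\Theta_1\|^2\,ds$ that the lemma asserts. Finally, you end up with $\rho'$ on the right and propose an integration by parts that reintroduces $\theta'$ through $(s^2\Theta_1)'$, so the argument does not close.

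The paper avoids all three difficulties with a single move you gesture at but do not commit to: integrate \eqref{sup-2} in time \emph{first}. This replaces $\theta'$ by $\theta$, $\Ba$ by $\I^\alpha$, and $\rho'$ by $\rho$ (the $e(0)$ term drops because $u_h(0)=P_hu_0$), yielding $(\theta,\chi)+A(\I^\alpha\theta,\chi)=(\rho,\chi)$. Now multiply by $t$ and use Lemma~\ref{Ia}(b), not (a). The resulting equation is $(\Theta_1,\chi)+A(\I^\alpha\Theta_1,\chi)=t(\rho,\chi)-\alpha A(\I^{1+\alpha}\theta,\chi)$, and the lower-order fractional term is not absorbed or dominated: it is rewritten as an $L^2$ pairing by integrating the equation once more to get $A(\I^{1+\alpha}\theta,\chi)=(\I(\rho-\theta),\chi)$. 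You are now exactly in the $\kappa=1$ framework of Lemma~\ref{lem: reg use} (with ${\mathcal B}^\alpha=\I^\alpha$), which gives $\int_0^t\|\Theta_1\|^2\,ds\le\int_0^t\|s\rho-\alpha\I(\rho-\theta)\|^2\,ds$ directly; the constants $3$ and $2$ come from the crude bound $\|a+b+c\|^2\le 3(\|a\|^2+\|b\|^2+\|c\|^2)$ together with the auxiliary estimate $\int_0^t\|\I\theta\|^2\,ds\le\int_0^t\|\I\rho\|^2\,ds$, itself another $\kappa=1$ application of Lemma~\ref{lem: reg use} to the twice-integrated equation. No integration by parts, no handling of $\rho'$, and no weighted positivity are needed.
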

\begin{proof} We integrate \eqref{sup-2} over the time interval $(0,t)$ and obtain
\begin{equation} \label{sup-3}
(\theta,\chi)+A(\I^\alpha\theta-\I^\alpha\theta(0^+),\chi)=(\rho+e(0),\chi)\quad \forall~\chi \in V_h.
\end{equation}
However, $A(\I^\alpha\theta(0^+),\chi)=-A(\I^\alpha e(0),\chi)$ for  $\chi \in V_h$, and $\I^\alpha e(0)=0$ because $u$ and $u_h$ are both continuous on the time interval $[0,T]$. Furthermore, $(e(0),\chi)= 0$ due to the equality  $u_h(0)=P_h u_0.$ Therefore,
\begin{equation} \label{sup-3 ph}
(\theta,\chi)+A(\I^\alpha\theta,\chi)=(\rho,\chi)\quad \forall~\chi \in V_h.
\end{equation}
Multiply by $t$ and use $t\I^\alpha\theta= \I^\alpha\Theta_1+\alpha\I^{1+\alpha}\theta$ by Lemma \ref{Ia}\,(b) to find that
\[(\Theta_1,\chi)+A(\I^\alpha\Theta_1,\chi)=
t(\rho,\chi)-\alpha A(\I^{1+\alpha}\theta,\chi)\quad \forall~\chi \in V_h.\]
However, from  \eqref{sup-3 ph}, we get
\begin{equation}\label{eq: I alpha+1}
A(\I^{\alpha+1}\theta,\chi)=(\I(\rho-\theta),\chi)\quad \forall~\chi \in V_h,
\end{equation}
and thus,
\begin{equation} \label{sup-3-2}
(\Theta_1,\chi)+A(\I^\alpha\Theta_1,\chi)=t(\rho,\chi)-\alpha (\I(\rho-\theta),\chi)\quad \forall~\chi \in V_h.
\end{equation}
Consequently, an application of Lemma \ref{lem: reg use}  (with $\kappa=1$) yields
\begin{equation}\label{eq: second last step}
\int_0^t\|\Theta_1\|^2\,ds \le \int_0^t \|s\rho-\alpha\I(\rho-\theta)\|^2\,ds \le 3\int_0^t ( s^2\|\rho\|^2+ \|\I\rho\|^2+\|\I\theta\|^2)\,ds.
\end{equation}
To complete our proof, we rewrite  \eqref{eq: I alpha+1} as
\[(\I\theta,\chi)+A(\I^\alpha(\I\theta),\chi)=(\I\rho,\chi)\quad \forall~\chi \in V_h,\]
Again, an application of Lemma \ref{lem: reg use}  (with $\kappa=1$) shows
\begin{align}\label{eq: theta less rho}
\int_0^t \|\I\theta\|^2\,ds &\le  \int_0^t \|\I\rho\|^2\,ds.
\end{align}
Substitute (\ref{eq: theta less rho}) in \eqref{eq: second last step} yields the desired bound.$\quad \Box$
\end{proof}

An upper bound of the term $\theta$ will be derived  in the next lemma. Again, for convenience, we introduce the following notation
\begin{equation}\label{def: B1}
{\mathcal B}_1(t):=\int_0^t \Big(  s^4\|\rho'(s)\|^2+s^2\|\rho(s)\|^2+ 2\big(\I\|\rho(s)\|\big)^2 \Big)\,ds\,.\end{equation}
For later use, by using the projection error estimates in \eqref{eq: rho} (with $m=2$) for upper bounds of $\rho$ and $\rho'$, and then integrating, we find that for $t\in (0,T]$,
\begin{equation}\label{eq: bound of B1}
{\mathcal B}_1(t)\leq C\,h^4 t^{3-\alpha(2-\delta)}d_\delta^2(u_0,f),\quad {\rm for}~~0\le \delta\le 2\,.
\end{equation}
 \begin{lemma} \label{H1}
 Let  $u_h(0)=P_h u_0$. Then,  the following estimate holds
\[\|\theta(t)\|^2 \leq C\, t^{-3} {\mathcal B}_1(t),\quad {\rm for}~~t\in (0,T]\,.\]
\end{lemma}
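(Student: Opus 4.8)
The plan is to bound $\|\theta(t)\|$ by testing the $\theta$-equation with a suitable multiple of $\Theta_2 = t^2\theta$, exploiting the positivity of $\I^\alpha$ and the Leibniz-type identities of Lemma~\ref{Ia} to keep the fractional operator under control, and then feeding in the already-established $L^2$-in-time bound on $\Theta_1$ from Lemma~\ref{lem: estimate of Theta in l2 norm}. Concretely, I would multiply \eqref{sup-2} by $t^2$ and use Lemma~\ref{Ia}(a) twice to rewrite $t^2\Ba\theta$ in terms of $\Ba\Theta_2$ and lower-order weighted terms; schematically $t^2\Ba\theta = \Ba\Theta_2 - 2(1-\alpha)\I^\alpha\Theta_1 - \text{(correction)}$, so that the equation becomes
\[
(\dot\Theta_2,\chi) + A(\Ba\Theta_2,\chi) = (t^2\rho' + 2t\theta,\chi) + A\big(\text{lower-order }\I^\alpha\text{ terms in }\Theta_1,\theta\big),\quad\forall\chi\in V_h.
\]
Note $\dot\Theta_2 = t^2\theta' = \Theta_2' - 2t\theta = \Theta_2' - 2\Theta_1$, so after moving the $2\Theta_1$ to the right one has an equation for $\Theta_2'$ of exactly the shape \eqref{eq: reg 1} with $\kappa = 0$ and ${\mathcal B}^\alpha = \Ba$, with right-hand side $w = t^2\rho' + (\text{terms involving }\Theta_1,\theta)$; a subtlety is that the residual contains an $A(\I^\alpha(\cdot),\cdot)$ piece, which I would handle either by moving it under the $\Ba$ by writing $\I^\alpha = \Ba\I$ (so it recombines with the principal term at the cost of an extra time-integration) or by a separate positivity estimate as in the proof of Lemma~\ref{lem: estimate of Theta in l2 norm}. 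Either way, Lemma~\ref{lem: reg use} with $\kappa=0$ yields $\|\Theta_2(t)\| \le \int_0^t\|w\|\,ds$, hence (using $\Theta_2(0)=0$) a bound on $t^2\|\theta(t)\|$.

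The terms on the right are then estimated as follows: the $t^2\rho'$ contribution integrates to something controlled by $\big(\int_0^t s^2\|\rho'\|\,ds\big)^2 \le t\int_0^t s^4\|\rho'\|^2\,ds$ by Cauchy--Schwarz, which is $\le C t\,{\mathcal B}_1(t)$; the $\Theta_1$- and $\I\theta$-type contributions are handled by Cauchy--Schwarz in time plus Lemma~\ref{lem: estimate of Theta in l2 norm} and \eqref{eq: theta less rho}, both of which already bound $\int_0^t\|\Theta_1\|^2$ and $\int_0^t\|\I\theta\|^2$ by $\int_0^t(s^2\|\rho\|^2 + (\I\|\rho\|)^2)\,ds$, again a piece of ${\mathcal B}_1(t)$. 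Collecting, $\|\Theta_2(t)\|^2 \le C t\,{\mathcal B}_1(t)$, i.e. $t^4\|\theta(t)\|^2 \le C t\,{\mathcal B}_1(t)$, which gives $\|\theta(t)\|^2 \le C t^{-3}{\mathcal B}_1(t)$ as claimed. The bookkeeping of exactly which weighted integrals appear — making sure every term that shows up is dominated by one of the three summands defining ${\mathcal B}_1$ with the right power of $t$ pulled out front — is the routine but delicate part.

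The main obstacle, as in Lemma~\ref{lem: estimate of Theta in l2 norm}, is that $t^2$ and $\Ba$ do not commute, so the clean application of the positivity property \eqref{eq: positive of Ba} is only available after the Leibniz rewriting, and that rewriting spawns $\I^\alpha$-terms of mixed weight that are not themselves positive-definite under $A$. The key trick is therefore the bootstrap structure: one cannot estimate $\theta$ directly, but only after $\Theta_1$ (equivalently $\I\theta$) has been controlled at the previous level, which is precisely what Lemma~\ref{lem: estimate of Theta in l2 norm} and \eqref{eq: theta less rho} provide; the present lemma is the second rung of that ladder. A secondary technical point is ensuring the integral inequality hypothesis of Lemma~\ref{lem: reg use} (the $\kappa=0$ case) genuinely applies — i.e. that after all rearrangements the equation really is of the form \eqref{eq: reg 1} with a pointwise-in-time source $w$ whose $L^1(0,t)$ norm is finite — which uses the regularity estimate \eqref{eq: rho} to guarantee integrability of $s^2\|\rho'(s)\|$ near $s=0$ even in the nonsmooth case $\delta=0$ (there $s^2\|\rho'\| \lesssim s^{1-\alpha}$, integrable).
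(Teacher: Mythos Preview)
Your overall strategy --- multiply \eqref{sup-2} by $t^2$, use the Leibniz formula to rewrite $t^2\Ba\theta$ in terms of $\Ba\Theta_2$ plus lower-order pieces, apply Lemma~\ref{lem: reg use} with $\kappa=0$, then feed in Lemma~\ref{lem: estimate of Theta in l2 norm} and \eqref{eq: theta less rho} --- matches the paper exactly, and the bookkeeping in your final paragraph is correct.

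The gap is precisely the step you flag as ``a subtlety'': disposing of the residual $A$-terms that the Leibniz rewriting produces, namely $2(1-\alpha)\,tA(\I^\alpha\theta,\chi)-\alpha(1-\alpha)A(\I^{1+\alpha}\theta,\chi)$ (note the correct form has $t\I^\alpha\theta$, not $\I^\alpha\Theta_1$). Neither of your two suggestions quite works. Writing $\I^\alpha=\Ba\I$ and absorbing into the principal term forces a change of unknown that no longer matches the time-derivative term, so Lemma~\ref{lem: reg use} does not apply; and a ``separate positivity estimate'' is not available, since these terms sit on the right with no definite sign when tested against $\Theta_2$. The paper's move --- which is also what happens in the proof of Lemma~\ref{lem: estimate of Theta in l2 norm}, though it is an algebraic substitution rather than a positivity argument --- is to invoke the time-integrated equation \eqref{sup-3 ph}, which gives $A(\I^\alpha\theta,\chi)=(\rho-\theta,\chi)$, and its further integral \eqref{eq: I alpha+1}, which gives $A(\I^{1+\alpha}\theta,\chi)=(\I(\rho-\theta),\chi)$. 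Substituting these eliminates the $A$-residuals entirely, leaving
\[
(\Theta_2',\chi)+A(\Ba\Theta_2,\chi)=\bigl(t^2\rho'+2\alpha\Theta_1+(1-\alpha)(2t\rho-\alpha\I(\rho-\theta)),\chi\bigr),
\]
to which Lemma~\ref{lem: reg use} applies directly. Once you have this substitution, your closing estimates go through verbatim.
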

\begin{proof} We multiply  (\ref{sup-2}) by $t^2$ so that
\begin{equation}\label{k-3}
(\dot\Theta_2,\chi)+A(t^2\Ba\theta,\chi)=(t^2\rho', \chi),
\end{equation}
where $\dot\Theta_2= t^2 \theta'.$ From the fractional  Leibniz formula, we have
\begin{align*}
t^2\Ba\theta&=\Ba\Theta_2- 2(1-\alpha)t\I^{\alpha} \theta+\alpha(1-\alpha)\I^{1+\alpha}\theta.
\end{align*}
Hence, we  rearrange (\ref{k-3}) as
\begin{equation}\label{k2-1}
(\dot\Theta_2,\chi)+A(\Ba\Theta_2,\chi)=(t^2\rho', \chi)+ (1-\alpha)\Big(2tA(\I^{\alpha} \theta,\chi)-\alpha A(\I^{1+\alpha}\theta,\chi)\Big),
\end{equation}
and then, by equations  \eqref{sup-3 ph} and \eqref{eq: I alpha+1},
\begin{equation}\label{k3}
(\Theta_2',\chi)+A(\Ba\Theta_2,\chi)=(t^2\rho'+  2\alpha \Theta_1+  (1-\alpha)(2t\rho-\alpha\I (\rho-\theta)),\chi).
\end{equation}
Hence, by  Lemma  \ref{lem: reg use}  (with $\kappa=0$),  we obtain
\begin{align*}
\|\Theta_2(t)\|  &\leq \int_0^{ t} \Big(s^2\|\rho'(s)\|+ 2s\|\rho(s)\|+2\|\Theta_1(s)\|+\|\I(\rho-\theta)\|\Big)\,ds\,,
\end{align*}
and thus, an application of the Cauchy-Schwarz inequality yields
\begin{align*}
\|\Theta_2(t)\|^2 &\leq Ct\int_0^t \Big( s^4\|\rho'(s)\|^2+s^2\|\rho(s)\|^2+\|\Theta_1(s)\|^2+\|\I\rho\|^2+\|\I\theta\|^2\Big)\,ds\,.
\end{align*}
Therefore, by using the identity  $\theta(t)=t^{-2}\Theta_2(t)$, the inequality in \eqref{eq: theta less rho} and Lemma \ref{lem: estimate of Theta in l2 norm} will complete the rest of the proof. $\quad \Box$
\end{proof}

In the next theorem, we derive optimal convergence results   of the finite element  \eqref{semi} in the $L^2(\Omega)$-norm  for both smooth and nonsmooth initial data $u_0$. For $u_0 \in \dot H^{\delta}(\Omega)$ with $0\le \delta\le 2,$ we show that the error is bounded by $C h^2 t^{-\alpha(2-\delta)/2}$ for each  $t\in (0,T]$. Recall that,  $\dot H^{\delta}(\Omega)=\{v\in H^{\delta}(\Omega):~v=0~{\rm on}~\partial \Omega\}$ for $1/2<\delta\le 2$, while $\dot H^{\delta}(\Omega)=H^{\delta}(\Omega)$ for $0\le \delta <1/2$.
 \begin{theorem} \label{thm: smooth and nonsmooth}
 Let $u$ and $u_h$  be the solutions of $(\ref{a})$ and $(\ref{semi})$,  respectively, with $u_h(0)=P_h u_0$. Then,
\[ \|(u-u_h)(t)\| \leq
 C h^2 t^{-\alpha(2-\delta)/2}d_\delta(u_0,f)\quad{\rm for}~~t \in (0,T]~~{\rm with}~~0\le \delta \le 2\,.\]
\end{theorem}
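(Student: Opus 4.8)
The plan is to combine the triangle inequality $\|u-u_h\| \le \|\rho\| + \|\theta\|$ with the estimates already assembled: the projection error bound \eqref{eq: rho} controls $\|\rho(t)\|$, and Lemma~\ref{H1} together with \eqref{eq: bound of B1} controls $\|\theta(t)\|$. So the proof is essentially a bookkeeping exercise of plugging in. First I would write
\[
\|(u-u_h)(t)\| \le \|\rho(t)\| + \|\theta(t)\|.
\]
For the first term, \eqref{eq: rho} with $\ell=0$ and $m=2$ gives $\|\rho(t)\| \le C h^2 t^{-\max\{0,\alpha(2-\delta)/2\}} d_\delta(u_0,f)$, and since $0\le\delta\le 2$ we have $\alpha(2-\delta)/2\ge 0$, so $\max\{0,\alpha(2-\delta)/2\} = \alpha(2-\delta)/2$ and this term is exactly $C h^2 t^{-\alpha(2-\delta)/2} d_\delta(u_0,f)$ as claimed.

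For the second term I would invoke Lemma~\ref{H1}, which gives $\|\theta(t)\|^2 \le C t^{-3}\mathcal{B}_1(t)$, and then the already-recorded bound \eqref{eq: bound of B1}, namely $\mathcal{B}_1(t) \le C h^4 t^{3-\alpha(2-\delta)} d_\delta^2(u_0,f)$. Multiplying, $\|\theta(t)\|^2 \le C h^4 t^{-\alpha(2-\delta)} d_\delta^2(u_0,f)$, so $\|\theta(t)\| \le C h^2 t^{-\alpha(2-\delta)/2} d_\delta(u_0,f)$. Adding the two contributions and absorbing constants yields the stated estimate. The limiting case $\alpha\to 1^-$ then reproduces the classical parabolic rate $t^{-(2-\delta)/2}h^2$, which is consistent with \eqref{e1}-type bounds in that regime; I would remark on this but it requires no separate argument.

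The only genuinely nontrivial inputs have already been done: Lemma~\ref{H1} (and, beneath it, Lemma~\ref{lem: estimate of Theta in l2 norm}), which is where the $t^m$-weighted energy argument and the generalized Leibniz formula of Lemma~\ref{Ia} do the real work of extracting the extra powers of $t$ needed to cancel the singularity of $\rho'$ near $t=0$. Given those, the theorem itself is immediate. The subtle point worth double-checking is the exponent arithmetic in \eqref{eq: bound of B1}: one must verify that each of the three pieces of $\mathcal{B}_1$ — $\int_0^t s^4\|\rho'\|^2\,ds$, $\int_0^t s^2\|\rho\|^2\,ds$, and $\int_0^t (\mathcal{I}\|\rho\|)^2\,ds$ — integrates to something bounded by $C h^4 t^{3-\alpha(2-\delta)}$; using \eqref{eq: rho} with $\ell=1,m=2$ for the first ($\|\rho'(s)\|\le C h^2 s^{-1-\alpha(2-\delta)/2}$, so the integrand is $\lesssim h^4 s^{2-\alpha(2-\delta)}$, integrable near $0$ precisely because $2-\alpha(2-\delta) > -1$ for $\delta\ge 0$, $\alpha<1$) and $\ell=0,m=2$ for the other two, and checking the integrals converge. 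I expect this exponent check to be the main place an error could hide, but it is routine and has been recorded in \eqref{eq: bound of B1}, so in the write-up I would simply cite \eqref{eq: rho}, \eqref{eq: bound of B1}, and Lemma~\ref{H1} and conclude.
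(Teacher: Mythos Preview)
Your proposal is correct and follows essentially the same route as the paper's own proof, which simply cites the decomposition $u-u_h=\rho-\theta$, the estimate of $\rho$ in \eqref{eq: rho}, and the combination of Lemma~\ref{H1} with \eqref{eq: bound of B1} for $\theta$. Your additional remarks on the exponent arithmetic behind \eqref{eq: bound of B1} and the limiting case $\alpha\to 1^-$ are sound sanity checks but are not needed in the write-up, since \eqref{eq: bound of B1} is already established in the paper.
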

\begin{proof} The desired result follows from the decomposition $u-u_h=\rho-\theta$, the estimate of $\theta$ in Lemma \ref{H1}, the bound  in \eqref{eq: bound of B1}, and the estimate of $\rho$  in \eqref{eq: rho}.  $\quad \Box$
\end{proof}
\begin{remark}\label{rem: 0}
In the proof of the above theorem, we used \eqref{eq: bound of B1} which follows from the projection estimate in \eqref{eq: rho} for $m=2.$ For $m=1$, we follow similar steps where  $2-\delta$ will be replaced with $1-\delta$, to obtain ${\mathcal B}_1(t)\leq C\,h^2 t^{3-\alpha(1-\delta)}d_\delta^2(u_0,f)$ for $0\le \delta\le 1\,.$ Now, for $1<\delta\le 2$, we notice first that for $0\le q\le t$ with $t_h=\max\{t,h\}$,
\[\|\rho(q)\|\le \|\rho(t_h)-\rho(q)\|+\|\rho(t_h)\|\le \I(\|\rho'(t_h)\|)+\|\rho(t_h)\|\,.\]
  Substituting this in the definition of ${\mathcal B}_1$ defined in \eqref{def: B1}, we observe
\[{\mathcal B}_1(t)\le Ct^2\int_0^t s^2\|\rho'(s)\|^2\,ds+Ct^3\Big(\I(\|\rho'(t_h)\|) +\|\rho(t_h)\|\Big)^2\,.\]
To estimate the first two terms, we use \eqref{rho-estimate} (with $m=1$) and the following regularity property (which follows from  \cite[Theorems 4.2 and 5.6]{Mclean2010})
\begin{equation}\label{eq: regularity property-2}
t\|u'(t)\|_1\le  Ct^{\alpha (r-1)/2}\tilde d_r(u_0,f),\quad t\in(0,T],\quad{\rm for}~~0\le r\le 2,
\end{equation}
where $\tilde d_r(u_0,f)=\|u_0\|_r +\sum_{j=0}^2\int_0^T s^{j+\alpha(1-r)/2}\|f^{(j)}(s)\|_r\,ds,$ we arrive to
\[{\mathcal B}_1(t)\le Ct^3 h^2 t_h^{\alpha(\delta-1)}\tilde d_\delta(u_0,f) +Ct^3\|\rho(t_h)\|^2,\quad{\rm for}~~1<\delta\le 2\,.\]
However, by \eqref{eq: rho} and the inequality $t_h^{-\alpha/2}\le h^{-\alpha/2}$, we find that
\[\|\rho(t_h)\| \le Ch^2 t_h^{\alpha(\delta-2)/2}d_\delta(u_0,f)\le  C h^{2-\alpha/2} t^{\alpha(\delta-1)/2}(t_h/t)^{\alpha(\delta-1)/2}d_\delta(u_0,f).\]
Therefore,
\[{\mathcal B}_1(t)\le C h^2 t^{3+\alpha(\delta-1)}(t_h/t)^{\alpha(\delta-1)}\tilde d^2_\delta(u_0,f),\quad{\rm for}~~1<\delta\le 2\,.\]
Consequently, by using the above bound of ${\mathcal B}_1$ in Theorem \ref{thm: smooth and nonsmooth}, we get  the error estimate below that will be used in the forthcoming section to show the convergence of the gradient finite element  solution:
 \begin{equation} \label{thm: smooth and nonsmooth m=1}
 \|e(t)\| \leq  C h\, t^{-\alpha(1-\delta)/2}D_{\delta,\alpha}(u_0,f,h/t),\quad {\rm for}~~~t \in (0,T],
\end{equation}
where $D_{\delta,\alpha}(u_0,f,h/t)= \tilde d_\delta(u_0,f)$ for $0\le \delta \le 1$, while for $1< \delta \le 2,$
 $D_{\delta,\alpha}(u_0,f,h/t)= \tilde d_\delta(u_0,f)(t_h/t)^{\alpha(\delta-1)/2}$. $\quad \Box$
\end{remark}
\begin{remark}\label{rem: 1}
Under the quasi-uniformity condition on $V_h$, for $t \in (0,T]$, from the decomposition  $u-u_h=\rho-\theta$, the inverse inequality, the estimate of $\theta$ in Lemma \ref{H1}, and the estimate $\|\rho(t)\|_1 \le  Ch\|u(t)\|_2\le Ct^{-\alpha(2-\delta)/2}d_\delta(u_0,f)$ (follows from the Ritz projection bound in \eqref{rho-estimate} with $j=1$ and $m=2$  and the regularity property \eqref{eq: regularity property}), we obtain the following optimal error estimate:
\[ \|\nabla(u-u_h)(t)\| \leq C h\, t^{-\alpha(2-\delta)/2}d_\delta(u_0,f)\quad{\rm for}~~t \in (0,T]~~{\rm with}~~0\le \delta \le 2\,.\]
 This error bound remains valid for $0\le \delta\le 1$ in the absence of the quasi-uniformity mesh assumption,  see Theorem \ref{thm: H1 bound}.$\quad \Box$
\end{remark}

   \begin{remark}\label{rem: 2} For smooth initial data $u_0\in {\dot H}^2(\Omega)$, one may choose $u_h(0)=R_hu_0$. An optimal convergence rate can be shown by following the proof of Theorem \ref{thm: smooth and nonsmooth} line-by-line, where the term $\rho$ in Lemma \ref{lem: estimate of Theta in l2 norm} should be replaced with $\tilde \rho:=\rho+e(0)$. $\quad \Box$ \end{remark}

\section {$H^1(\Omega)$- and $L^\infty(\Omega)$-error estimates}\label{sec: LinftyH1}
In this section, we show optimal convergence error results  in the  $H^1(\Omega)$-norm, and quasi-optimal error bounds  in the  $L^\infty(\Omega)$-norm, for both smooth and nonsmooth initial data $u_0.$ We start our analysis by deriving an upper bound of   $\nabla \Theta_1$.
\begin{lemma}\label{theta}
For $0\le \delta \le 2$ and for  $t\in(0,T]$, we have
\[\int_0^{t} \|\nabla\Theta_1\|^2ds \leq C h^4 t^{3 -\alpha(3-\delta)}d^2_\delta(u_0,f)\,.\]
 \end{lemma}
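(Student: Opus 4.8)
The plan is to mimic the structure of the proof of Lemma~\ref{lem: estimate of Theta in l2 norm}, but now testing with $\chi=\Theta_1$ inside the bilinear form $A(\cdot,\cdot)$ rather than with the $L^2$ inner product, so as to extract control of $\|\nabla\Theta_1\|$. Starting point: recall the identity \eqref{sup-3-2}, namely
\[
(\Theta_1,\chi)+A(\I^\alpha\Theta_1,\chi)=t(\rho,\chi)-\alpha(\I(\rho-\theta),\chi)\qquad\forall\,\chi\in V_h,
\]
and also differentiate it in $t$. Since $\Theta_1=t\theta$ gives $\Theta_1'=\dot\Theta_1+\theta$ and $\frac{d}{dt}\I^\alpha\Theta_1=\Ba\Theta_1$ (as $\Theta_1(0)=0$), differentiating yields an evolution equation of the form
\[
(\Theta_1',\chi)+A(\Ba\Theta_1,\chi)=(g,\chi)\qquad\forall\,\chi\in V_h,
\]
with $g=\theta+\rho+t\rho'-\alpha(\rho-\theta)$, where I have used \eqref{sup-3 ph} to rewrite $A(\I^\alpha\theta,\chi)=(\rho-\theta,\chi)$ after differentiating the $\I(\rho-\theta)$ term. (Some care with the algebra of the fractional Leibniz formula is needed here, but it is the same bookkeeping already carried out in Lemma~\ref{Ia} and Lemma~\ref{H1}.) This puts us in a position to apply an energy argument analogous to Lemma~\ref{lem: reg use}, but with $\chi=\Theta_1'$: integrating $A(\Ba\Theta_1,\Theta_1')$ against time and invoking the positivity property \eqref{eq: positive of Ba} of $\Ba$ (or rather the companion coercivity that gives $\int_0^t A(\Ba\Theta_1,\Theta_1')\,ds\ge 0$ plus a boundary term controlling $\|\nabla\Theta_1(t)\|^2$), one obtains
\[
\|\nabla\Theta_1(t)\|^2 + \int_0^t\|\Theta_1'\|^2\,ds \le C\int_0^t\|g\|^2\,ds,
\]
or after integrating once more in $t$, a bound on $\int_0^t\|\nabla\Theta_1\|^2\,ds$ in terms of $\int_0^t\|g\|^2\,ds$.

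Next I would estimate $\int_0^t\|g\|^2\,ds$. Expanding, $\|g\|^2\le C(\|\theta\|^2+\|\rho\|^2+t^2\|\rho'\|^2)$. The $\rho$ and $t^2\|\rho'\|^2$ contributions are handled directly by the projection bounds \eqref{eq: rho} with $m=2$; integrating the resulting powers of $s$ gives a term of the right order $h^4 t^{3-\alpha(2-\delta)}$, which is in fact \emph{better} (smaller in the relevant regime, or at least no worse) than the target $h^4 t^{3-\alpha(3-\delta)}$ after one accounts for the extra time weight one picks up in the gradient estimate. The $\|\theta\|^2$ contribution is controlled by Lemma~\ref{H1}: $\|\theta(s)\|^2\le Cs^{-3}\mathcal B_1(s)\le Ch^4 s^{-\alpha(2-\delta)}d_\delta^2(u_0,f)$ by \eqref{eq: bound of B1}; integrating $s^{-\alpha(2-\delta)}$ over $(0,t)$ is fine since $\alpha(2-\delta)<2$, and yields $Ch^4 t^{1-\alpha(2-\delta)}d_\delta^2(u_0,f)$. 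Collecting terms and keeping the dominant power of $t$ gives the claimed bound $Ch^4 t^{3-\alpha(3-\delta)}d_\delta^2(u_0,f)$; the exponent $3-\alpha(3-\delta)$ (rather than $3-\alpha(2-\delta)$) is exactly the loss of one factor of $t^{-\alpha/2}$ squared one expects from passing from an $L^2$-type estimate to an $H^1$-type estimate, and it should emerge from tracking the time weights carefully.

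The main obstacle I anticipate is getting the \emph{right} power of $t$ rather than a cruder one. A naive application of Cauchy--Schwarz in the step $\int_0^t\|\nabla\Theta_1\|^2\le C\int_0^t\|g\|^2$ may lose an extra factor of $t$; the sharp route likely requires not integrating the differentiated equation a second time but instead using the pointwise-in-$t$ coercivity bound $\|\nabla\Theta_1(t)\|^2\le C\int_0^t\|g\|^2\,ds$ and then integrating \emph{that} over $(0,t)$, or alternatively multiplying the differentiated equation by a further weight before testing. A secondary delicate point is justifying the coercivity inequality with test function $\Theta_1'$ (as opposed to $\Theta_1$): this is where a Leibniz-type manipulation $A(\Ba\Theta_1,\Theta_1')=\frac12\frac{d}{dt}A(\Ba\Theta_1,\Theta_1)+\text{nonnegative}$ or an appeal to \cite{McLean2012, MustaphaSchoetzau2014}-style positivity lemmas is needed, and one must check the hypotheses (enough regularity of $\Theta_1$, which holds since $\theta\in V_h$ is smooth in $t$ away from $0$ and $\Theta_1$ vanishes at $t=0$). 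Once these two technical points are settled, the remaining work is the routine integration of powers of $s$ already illustrated in \eqref{eq: bound of B1} and Remark~\ref{rem: 0}.
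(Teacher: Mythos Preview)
Your derivation of the evolution equation for $\Theta_1$ is essentially the same as the paper's (up to bookkeeping), but the energy step that follows contains a genuine gap. You propose to test with $\chi=\Theta_1'$ and hope for a coercivity of the type $\int_0^t A(\Ba\Theta_1,\Theta_1')\,ds\ge c\|\nabla\Theta_1(t)\|^2$. No such inequality is available: the identity $A(\Ba\Theta_1,\Theta_1')=\tfrac12\tfrac{d}{dt}A(\Ba\Theta_1,\Theta_1)+\text{nonnegative}$ you suggest is false for the fractional operator (it holds only in the local case $\alpha\to1^-$), and the positivity lemmas in \cite{McLean2012,MustaphaSchoetzau2014} give only $\int_0^t A(\I^\alpha\Theta_1',\Theta_1')\,ds\ge 0$, which yields no control of $\|\nabla\Theta_1\|$. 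Even if you could somehow obtain $\|\nabla\Theta_1(t)\|^2\le C\int_0^t\|g\|^2\,ds$, integrating once more produces the exponent $2-\alpha(2-\delta)$ on $t$, not the claimed $3-\alpha(3-\delta)$; you have the wrong loss by exactly $t^{1-\alpha}$.

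The paper's route is both simpler and gives the correct exponent in one stroke: test \eqref{k3-1} with $\chi=\Theta_1$ (not $\Theta_1'$) and invoke \eqref{eq: positive of Ba} directly. That inequality reads
\[
\int_0^t A(\Ba\Theta_1,\Theta_1)\,ds\ \ge\ \tfrac12\sin(\alpha\pi/2)\,t^{\alpha-1}\int_0^t\|\sqrt{{\bf a}}\,\nabla\Theta_1\|^2\,ds,
\]
so the gradient term appears automatically, with the factor $t^{\alpha-1}$ built in. The remaining term $\int_0^t(\Theta_1',\Theta_1)\,ds=\tfrac12\|\Theta_1(t)\|^2$ is harmless, and the right-hand side is handled by the integral inequality stated before Lemma~\ref{lem: reg use}, yielding
\[
t^{\alpha-1}\int_0^t\|\nabla\Theta_1\|^2\,ds\ \le\ C\Big(\int_0^t\big(s\|\rho'\|+\|e\|+\|\rho\|\big)\,ds\Big)^2.
\]
Now \eqref{eq: rho} and Theorem~\ref{thm: smooth and nonsmooth} make the bracket $O(h^2t^{1-\alpha(2-\delta)/2})$, and multiplying by $t^{1-\alpha}$ produces exactly $h^4t^{3-\alpha(3-\delta)}$. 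The missing $t^{1-\alpha}$ you were hunting for is not a time-weight trick; it comes from the coercivity constant in \eqref{eq: positive of Ba}.
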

\begin{proof} Multiplying  \eqref{sup-2} by $t$ and  then using the first identity in Lemma \ref{Ia},
\begin{equation}\label{k2}
(\dot \Theta_1,\chi)+A(\Ba\Theta_1,\chi)=(t\rho', \chi)+ (1-\alpha)A(\I^\alpha\theta,\chi).
\end{equation}
Then, a use of  (\ref{sup-3 ph})  yields after simplifying
\begin{equation}\label{k3-1}
(\Theta_1',\chi)+A(\Ba\Theta_1,\chi)= ((t\rho)',\chi)-\alpha(e, \chi)\,.
\end{equation}
Now, set $\chi=\Theta_1$ in \eqref{k3-1}, integrate the resulting equation over $(0,t)$, and use the positivity property  of $\Ba$ in  \eqref{eq: positive of Ba},  to find that
\begin{equation}\label{eq: bound theta 12}\|\Theta_1(t)\|^2+\frac{1}{2}\sin(\alpha\pi/2) t^{\alpha-1}
\int_0^t\|\sqrt{{\bf a}} \nabla \Theta_1\|^2\,ds \le \int_0^t (\|(s\rho)'\|+\|e\|)\|\Theta_1\|\,ds.\end{equation}
 This implies
\begin{align*}\|\Theta_1(t)\|^2 &\le \int_0^t (\|(s\rho')\|+\|e\|)\|\Theta_1\|\,ds.\end{align*}
By the integral inequality (stated before Lemma \ref{lem: reg use}), we observe
\begin{align*}\|\Theta_1(t)\| &\le \frac{1}{2}\int_0^t \big(\|(s\rho)'\|+\|e\|\big)\,ds.\end{align*}
Substitute this bound in the RHS of \eqref{eq: bound theta 12} yields
\begin{equation}\label{eq: bound theta}
\sin(\alpha\pi/2) t^{\alpha-1} \int_0^t\| \sqrt{{\bf a}}\nabla \Theta_1\|^2\,ds \le  \Big(\int_0^t \big(\|(s\rho)'\|+\|e\|\big)ds\Big)^2.
\end{equation}
Therefore, the desired   estimate follows from this bound, the error projection in \eqref{eq: rho} (with $m=2$), the convergence results in Theorem \ref{thm: smooth and nonsmooth}, and   \eqref{eq: A positive}.$\quad \Box$
\end{proof}

In the next theorem, we derive  an error bound for $\nabla \theta(t)$ in the $L^2(\Omega)$-norm.
\begin{theorem}\label{sup-conv-1}
For $0\le \delta\le 2$, we have
\[\|\nabla\theta(t)\|^2 \leq Ch^4 t^{-\alpha(3-\delta)}d_\delta^2(u_0,f),\quad {\rm  for}~~ t\in(0,T]\,.\]
\end{theorem}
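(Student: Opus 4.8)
The plan is to work with the weighted variable $\Theta_2(t)=t^2\theta(t)$, for which the error equation has already been recast in the form \eqref{k3}, namely $(\Theta_2',\chi)+A(\Ba\Theta_2,\chi)=(g_2,\chi)$ for all $\chi\in V_h$, with $g_2:=t^2\rho'+2\alpha\Theta_1+(1-\alpha)\big(2t\rho-\alpha\I(\rho-\theta)\big)$. The idea is to test with $\chi=\Theta_2'\in V_h$ rather than with $\Theta_2$ itself. Since $\Theta_2(0)=0$ we may replace $\Ba\Theta_2$ by $\I^\alpha\Theta_2'$, so that the stiffness term becomes $A(\I^\alpha\Theta_2',\Theta_2')$, which is exactly of the form controlled by the positivity estimate \eqref{eq: positive of Ia}.

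Carrying this out, I would set $\chi=\Theta_2'$, integrate over $(0,t)$, use the lower bound in \eqref{eq: positive of Ia} (applied on $(0,t)$ with $\varphi=\Theta_2'$), bound the right-hand side by Cauchy--Schwarz and Young's inequality, and absorb $\frac12\int_0^t\|\Theta_2'\|^2\,ds$ into the left-hand side; together with \eqref{eq: A positive} this produces the space--time estimate
\[
\int_0^t\|\nabla\I^{\alpha/2}\Theta_2'\|^2\,ds\le C\int_0^t\|g_2\|^2\,ds.
\]
Since $\|g_2\|^2\le C\big(s^4\|\rho'\|^2+\|\Theta_1\|^2+s^2\|\rho\|^2+\|\I\rho\|^2+\|\I\theta\|^2\big)$, I would estimate $\int_0^t\|\Theta_1\|^2\,ds$ by Lemma \ref{lem: estimate of Theta in l2 norm}, use $\int_0^t\|\I\theta\|^2\,ds\le\int_0^t\|\I\rho\|^2\,ds$ from \eqref{eq: theta less rho} and $\|\I\rho(s)\|\le\I\|\rho\|(s)$, to arrive at $\int_0^t\|g_2\|^2\,ds\le C\,{\mathcal B}_1(t)\le Ch^4 t^{3-\alpha(2-\delta)}d_\delta^2(u_0,f)$ via \eqref{def: B1} and \eqref{eq: bound of B1}.

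It remains to convert this space--time control of $\nabla\I^{\alpha/2}\Theta_2'$ into a pointwise bound for $\nabla\Theta_2(t)$. Because $\Theta_2(0)=0$, we have $\Theta_2=\I\Theta_2'=\I^{1-\alpha/2}\I^{\alpha/2}\Theta_2'$, and since $\nabla$ commutes with the time integrals,
\[
\nabla\Theta_2(t)=\big(\I^{1-\alpha/2}\,\nabla\I^{\alpha/2}\Theta_2'\big)(t)=\int_0^t\omega_{1-\alpha/2}(t-s)\,\big(\nabla\I^{\alpha/2}\Theta_2'\big)(s)\,ds.
\]
A Cauchy--Schwarz inequality in time, together with $\int_0^t\omega_{1-\alpha/2}(t-s)^2\,ds=c_\alpha\,t^{1-\alpha}<\infty$ (where finiteness uses $0<\alpha<1$), gives
\[
\|\nabla\Theta_2(t)\|^2\le C\,t^{1-\alpha}\int_0^t\|\nabla\I^{\alpha/2}\Theta_2'\|^2\,ds\le C\,t^{1-\alpha}\int_0^t\|g_2\|^2\,ds\le Ch^4 t^{4-\alpha(3-\delta)}d_\delta^2(u_0,f).
\]
Dividing by $t^4$ and using $\theta(t)=t^{-2}\Theta_2(t)$ then yields the claimed bound $\|\nabla\theta(t)\|^2\le Ch^4 t^{-\alpha(3-\delta)}d_\delta^2(u_0,f)$.

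The main obstacle, and the reason for this slightly indirect route, is the coupling between the stiffness term and the time derivative. The natural first attempt is to test \eqref{k3} with the discrete elliptic operator applied to $\Theta_2$, turning $(\Theta_2',\chi)$ into $\frac12\frac{d}{dt}\|\sqrt{{\bf a}}\,\nabla\Theta_2\|^2$: the right-hand side then contributes the term $2\alpha A(\Theta_1,\Theta_2)$ --- exactly the quantity Lemma \ref{theta} is designed to control --- but also a term of the form $A(u'-P_h u',\Theta_2)$, whose treatment requires $H^1$-stability of the $L^2$-projection and hence a quasi-uniformity assumption on the mesh. Testing with $\Theta_2'$ instead, exploiting the positivity of $\I^\alpha$ to control $\nabla\I^{\alpha/2}\Theta_2'$ and then reconstructing $\nabla\Theta_2(t)$ through the fractional integral $\I^{1-\alpha/2}$, avoids this and keeps the estimate valid on general shape-regular meshes. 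A minor technical point to dispatch along the way is that $\Theta_2'=2t\theta+t^2\theta'$ is an admissible argument in \eqref{eq: positive of Ia} near $t=0$, which follows from the $t$ and $t^2$ weights together with the regularity bound \eqref{eq: regularity property}.
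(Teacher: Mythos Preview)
Your argument is correct and takes a genuinely different route from the paper's. The paper first applies $\I^{1-\alpha}$ to \eqref{k2-1}, so that the stiffness term becomes $A(\Theta_2,\chi)$; it then tests with $\chi=\dot\Theta_2$, uses the identity $\int_0^tA(\Theta_2,\dot\Theta_2)\,ds=\tfrac12\|\sqrt{\bf a}\,\nabla\Theta_2(t)\|^2-2\int_0^t s\|\sqrt{\bf a}\,\nabla\Theta_1\|^2\,ds$ to produce the pointwise gradient directly, and controls the resulting right-hand side (which now contains $A(\I\Theta_1,\dot\Theta_2)$ and $A(\I^2\theta,\dot\Theta_2)$) via Lemma~\ref{theta}, the continuity of $\I^{1-\alpha}$, and the $L^2$-error bound of Theorem~\ref{thm: smooth and nonsmooth}. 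You instead test \eqref{k3} with $\Theta_2'$, use the coercivity \eqref{eq: positive of Ia} of $\I^\alpha$ to get a space--time bound on $\nabla\I^{\alpha/2}\Theta_2'$, and recover the pointwise norm through $\Theta_2=\I^{1-\alpha/2}(\I^{\alpha/2}\Theta_2')$ and Cauchy--Schwarz in time. Your approach is shorter and, notably, bypasses Lemma~\ref{theta} and Theorem~\ref{thm: smooth and nonsmooth} altogether: you only need the $L^2$-level ingredients Lemma~\ref{lem: estimate of Theta in l2 norm}, \eqref{eq: theta less rho}, and \eqref{eq: bound of B1}. The paper's route is closer in spirit to the classical parabolic energy argument (stiffness term yields $\tfrac{d}{dt}\|\nabla\cdot\|^2$), at the price of a more involved treatment of the cross terms.
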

 \begin{proof}
 We start by  applying the operator $\I^{1-\alpha}$ to both sides of the elementary identity  $\Ba \Theta_2(t)=\I^\alpha \Theta_2'(t)+\omega_\alpha(t)\Theta_2(0^+),$ to notice that
\begin{equation}\label{eq: theta2 identity}\begin{aligned}
\I^{1-\alpha} \Ba \Theta_2(t)&=\I^{1-\alpha} \I^\alpha \Theta_2'(t)+\I^{1-\alpha} \omega_\alpha(t)\Theta_2(0^+)\\
&=\Theta_2(t)-\Theta_2(0^+)+\Theta_2(0^+)=\Theta_2(t).\end{aligned}
\end{equation}
Now, applying again the operator $\I^{1-\alpha}$ to both sides  \eqref{k2-1}, and using the above equality  as well as the identity  $t\I^\alpha\theta = \I^\alpha\Theta_1+\alpha\I^{1+\alpha}\theta$ (by Lemma \ref{Ia} \mbox{(b)}) to get
\[(\I^{1-\alpha} \dot\Theta_2,\chi)+A(\Theta_2,\chi)=(\I^{1-\alpha}(t^2\rho'), \chi)+ (1-\alpha)\Big(2A(\I \Theta_1,\chi)+\alpha A(\I^2\theta,\chi)\Big).\]
Set $\chi=\dot \Theta_2$ follows by integrating the resulting equation from 0 to $t$  to obtain
\begin{align*}
\int_0^t[(\I^{1-\alpha} &\dot\Theta_2,\dot \Theta_2)+A(\Theta_2,\dot \Theta_2)]\,ds
\\ &\le \int_0^t(\I^{1-\alpha}(s^2\rho'),\dot \Theta_2)\,ds+(1-\alpha)\int_0^tA(2\I \Theta_1+\alpha\I^2\theta,\dot \Theta_2)\,ds.\end{align*}
However, by the continuity property of the operator $\I^{1-\alpha}$  in \cite[Lemma 3.1]{MustaphaSchoetzau2014},
\[\Big|\int_0^t(\I^{1-\alpha}(s^2\rho'),\dot\Theta_2)\,ds\Big|\le C\int_0^t(\I^{1-\alpha}(s^2\rho'),s^2\rho')\,ds
+\int_0^t(\I^{1-\alpha}\dot\Theta_2,\dot\Theta_2)\,ds,\]
and so,
\[\int_0^tA(\Theta_2,\dot \Theta_2)\,ds\le C\int_0^t(\I^{1-\alpha}(s^2\rho'),s^2\rho')\,ds+(1-\alpha)\int_0^tA(2\I \Theta_1+\alpha\I^2\theta,\dot \Theta_2)\,ds.\]
Using the identity  $2\I \Theta_1(t)=\Theta_2(t)-\I \dot \Theta_2(t)$ and the inequality $\int_0^tA(\I \dot \Theta_2,\dot \Theta_2)\,ds\ge 0$, after some simplifications, we conclude that
\[\alpha\int_0^tA(\Theta_2,\dot \Theta_2)\,ds\le C\int_0^t\|\I^{1-\alpha}s^2\rho'\|\,\|s^2\rho'\|\,ds+\alpha(1-\alpha)\int_0^tA(\I^2\theta,\dot \Theta_2)\,ds.\]
Since
\[\int_0^t A(\Theta_2,\dot\Theta_2)\,ds=\frac{1}{2}\|\sqrt{{\bf a}}\nabla\Theta_2(t)\|^2
-2\int_0^t s\|\sqrt{{\bf a}}\nabla\Theta_1(s)\|^2\,ds, \]
we easily find that
\begin{multline}\label{sup-9-n}
\alpha\|\sqrt{{\bf a}}\nabla\Theta_2(t)\|^2 \le  4\alpha\int_0^t s\|\sqrt{{\bf a}}\nabla\Theta_1(s)\|^2\,ds
\\+C\int_0^t\|\I^{1-\alpha}s^2\rho'\|\,\|s^2\rho'\|\,ds +2\alpha(1-\alpha)\int_0^tA(\I^2\theta,\dot \Theta_2)\,ds.
\end{multline}
By Lemma \ref{theta},
\begin{equation}\label{kn1-n}
\int_0^t s\|\sqrt{{\bf a}}\nabla\Theta_1\|^2\,ds\leq Ct\int_0^t \|\nabla\Theta_1\|^2\,ds \leq C h^4 t^{4-\alpha(3-\delta)}d^2_\delta(u_0,f)\,.
\end{equation}
To estimate the second term on the RHS of \eqref{sup-9-n}, we use the bound of $\rho'$ given in \eqref{eq: rho} (with $m=2$), the formula
\begin{equation}\label{eq: Inu}
\I^\nu (t^{\mu-1})= t^{\nu+\mu-1}\Gamma(\mu),\quad{\rm for}~~\nu,\,\mu >0,\end{equation}
and then integrate
\begin{multline}\label{kn4-n}
\int_0^t(\I^{1-\alpha}(s^2\rho'),s^2\rho')\,ds \le C  h^4\int_0^t s^{2-\alpha-\alpha(2-\delta)/2} s^{1-\alpha(2-\delta)/2}\,ds\, d^2_\delta(u_0,f)\\
\le Ch^4 t^{4-\alpha-\alpha(2-\delta)}d^2_\delta(u_0,f)\,.
\end{multline}
For the last term on the RHS of \eqref{sup-9-n}, we apply $\I^{2-\alpha}$ to (\ref{sup-3 ph}) to obtain $A(\I^2 \theta,\chi)= (\I^{2-\alpha}e,\chi).$ Hence, integrating by parts, we find that
\begin{align*}
\int_0^tA(\I^2\theta,\dot \Theta_2)\,ds &=\int_0^t(s^2\I^{2-\alpha}e,\theta')\,ds\\
&=(\I^{2-\alpha}e(t),\Theta_2(t))-\int_0^t(2\I^{2-\alpha}e+s\I^{1-\alpha}e,\Theta_1)\,ds\,.\end{align*}
Then,  by using the estimate of $\theta$ in Lemma \ref{H1},  \eqref{eq: bound of B1}, and the estimate of $e$ in Theorem \ref{thm: smooth and nonsmooth},  we conclude  after integrating and using the formula in \eqref{eq: Inu}, that
\[\begin{aligned}
\Big|\int_0^tA(\I^2\theta,\dot \Theta_2)\,ds\Big|&\le t^2\|\I^{2-\alpha}e(t)\|\,\|\theta(t)\|+2t\int_0^t\|\I^{2-\alpha}e+s\I^{1-\alpha}e\|\,\|\theta\|\,ds\\
&\le Ch^4 t^{4-\alpha-\alpha(2-\delta)}d_\delta^2(u_0,f)\,.
\end{aligned}\]
A substitution of the estimates \eqref{kn1-n}, \eqref{kn4-n} and the above one   in (\ref{sup-9-n}), follows by using \eqref{eq: A positive} and the identity $\theta(t)=t^{-2}\Theta_2$ yield the desired estimate.  $\quad \Box$ \end{proof}

Noting that, by using the estimates of $\rho$, $\rho'$ and $e$ from  Remark \ref{rem: 0} in the inequality \eqref{eq: bound theta}, we observe
\[\int_0^{t} \|\nabla\Theta_1\|^2ds \leq C h^2 t^{3-\alpha(2-\delta)}D^2_{\delta,\alpha}(u_0,f,h/t)\,.\]
Hence, by following the steps in Theorem \ref{sup-conv-1}, and using the above  bound instead of  Lemma \ref{theta}, and the  bounds of $\rho'$ and $e$ achieved in Remark \ref{rem: 0},  we deduce that
\[  \|\nabla \theta(t)\|^2 \leq
 C\, h^2\,t^{-\alpha(2-\delta)}D^2_{\delta,\alpha}(u_0,f,h/t)\,.\]
Therefore, from the inequality $\|\nabla (u_h-u)(t)\|\le \|\nabla \theta(t)\| +\|\nabla \rho(t)\|$, the above bound, the  bound of  $\eta$ in \eqref{rho-estimate} (with $j=1$ and $m=2$)  and the regularity property (\ref{eq: regularity property}), we have the following result.
 \begin{theorem} \label{thm: H1 bound}
 Let $u$ and $u_h$  be the solutions of $(\ref{a})$ and $(\ref{semi})$, respectively, with $u_h(0)=P_h u_0$. For $u_0 \in \dot H^\delta(\Omega)$, for $t\in (0,T]$,  we have
\[ \|\nabla(u-u_h)(t)\| \leq C\, h\,t^{-\alpha(2-\delta)/2}\tilde d_\delta(u_0,f)\times \begin{cases} 1,~~~&~~0\le \delta \le 1,\\
 \max\{1,(h/t)^{\alpha(\delta-1)/2}\},~~~&~~1< \delta \le 2\,.\end{cases}\]
\end{theorem}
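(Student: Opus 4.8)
The plan is to build on the analysis already carried out for the $L^2$ gradient estimate in Theorem \ref{sup-conv-1}, but to feed in the weaker regularity input appropriate to the range $1 < \delta \le 2$. The key observation is that everything in Section \ref{sec: LinftyH1} up to and including the inequality \eqref{eq: bound theta} is purely structural: it uses only the $\theta$-equation \eqref{sup-2}, the identities of Lemma \ref{Ia}, the positivity property \eqref{eq: positive of Ba}, and the integral inequality preceding Lemma \ref{lem: reg use}. So I would start from \eqref{eq: bound theta} and, instead of inserting the $m=2$ projection bounds, insert the bounds for $\rho$, $\rho'$ and $e$ coming from Remark \ref{rem: 0} — namely $\|e(t)\| \le C h t^{-\alpha(1-\delta)/2} D_{\delta,\alpha}(u_0,f,h/t)$ from \eqref{thm: smooth and nonsmooth m=1}, together with the $m=1$ projection estimates $\|\rho^{(\ell)}(t)\| \le C h t^{-\ell} \|u^{(\ell)}(t)\|_1$ and the regularity estimate \eqref{eq: regularity property-2}. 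Integrating the resulting powers of $s$ (all of the form $s^{\beta}$ with $\beta > -1$, which is where one must check the exponents stay integrable) gives the claimed intermediate bound
\[
\int_0^{t} \|\nabla\Theta_1\|^2\,ds \le C h^2 t^{3-\alpha(2-\delta)} D^2_{\delta,\alpha}(u_0,f,h/t).
\]

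Next I would re-run the argument of Theorem \ref{sup-conv-1} verbatim at the level of identities: apply $\I^{1-\alpha}$ to \eqref{k2-1}, test with $\dot\Theta_2$, integrate, use the continuity of $\I^{1-\alpha}$ to absorb the $\rho'$ term, use $2\I\Theta_1 = \Theta_2 - \I\dot\Theta_2$ and $\int_0^t A(\I\dot\Theta_2,\dot\Theta_2)\,ds \ge 0$, and finally use $\int_0^t A(\Theta_2,\dot\Theta_2)\,ds = \tfrac12\|\sqrt{\bf a}\nabla\Theta_2(t)\|^2 - 2\int_0^t s\|\sqrt{\bf a}\nabla\Theta_1(s)\|^2\,ds$ to arrive at the analogue of \eqref{sup-9-n}. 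Then each of the three terms on the right is re-estimated with the Remark \ref{rem: 0} inputs: the $s\|\nabla\Theta_1\|^2$ term via the displayed intermediate bound (picking up one extra power of $t$), the $\I^{1-\alpha}(s^2\rho')$ term via \eqref{eq: Inu} and the $m=1$ bound on $\rho'$, and the $A(\I^2\theta,\dot\Theta_2)$ term by the same integration-by-parts trick as in Theorem \ref{sup-conv-1} combined with the $\theta$-bound of Lemma \ref{H1} (whose $\mathcal B_1$ is now controlled by the $m=1$ version in Remark \ref{rem: 0}) and the $e$-bound \eqref{thm: smooth and nonsmooth m=1}. Dividing by $\alpha$, using \eqref{eq: A positive}, and substituting $\theta(t) = t^{-2}\Theta_2(t)$ yields
\[
\|\nabla\theta(t)\|^2 \le C h^2 t^{-\alpha(2-\delta)} D^2_{\delta,\alpha}(u_0,f,h/t).
\]

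Finally, I would close via the splitting $\|\nabla(u-u_h)(t)\| \le \|\nabla\rho(t)\| + \|\nabla\theta(t)\|$. For $\nabla\rho$ I use \eqref{rho-estimate} with $j=1$, $m=2$ and the regularity \eqref{eq: regularity property}, which gives $\|\nabla\rho(t)\| \le C h\, t^{-\alpha(2-\delta)/2} d_\delta(u_0,f)$ — this term already has the desired form (indeed with the smaller constant $d_\delta \le \tilde d_\delta$ and without the $(h/t)$ factor). For $\nabla\theta$ I take the square root of the bound just obtained and unravel the definition of $D_{\delta,\alpha}$: for $0\le\delta\le 1$ it is $\tilde d_\delta(u_0,f)$, and for $1<\delta\le 2$ it is $\tilde d_\delta(u_0,f)(t_h/t)^{\alpha(\delta-1)/2}$, and since $t_h = \max\{t,h\}$ one has $(t_h/t)^{\alpha(\delta-1)/2} = \max\{1,(h/t)^{\alpha(\delta-1)/2}\}$. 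This matches the two cases in the statement exactly, completing the proof. The main obstacle — and the only place genuine care is needed — is the re-bounding of the $A(\I^2\theta,\dot\Theta_2)$ term in the nonsmooth range $1<\delta\le 2$: one must verify that after the integration by parts all the resulting time-integrals have exponents $> -1$ and that the $(t_h/t)$ weights from Remark \ref{rem: 0} propagate consistently through $\I^{2-\alpha}e$ and $\I^{1-\alpha}e$ without generating a worse power of $h/t$ than $\alpha(\delta-1)/2$; the low regularity of $u'$ near $t=0$ makes this the delicate accounting step, exactly as it was in Theorem \ref{sup-conv-1}.
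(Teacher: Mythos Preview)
Your proposal is correct and follows essentially the same route as the paper: start from \eqref{eq: bound theta}, feed in the Remark~\ref{rem: 0} bounds for $\rho$, $\rho'$ and $e$ to obtain the $h^2$-version of the $\int_0^t\|\nabla\Theta_1\|^2\,ds$ estimate, then re-run the proof of Theorem~\ref{sup-conv-1} with these inputs to bound $\|\nabla\theta(t)\|^2$, and close via the triangle inequality with $\|\nabla\rho(t)\|$ estimated through \eqref{rho-estimate} ($j=1$, $m=2$) and \eqref{eq: regularity property}. Your identification of the $A(\I^2\theta,\dot\Theta_2)$ term as the place requiring care is accurate, and your unpacking of $D_{\delta,\alpha}$ via $t_h=\max\{t,h\}$ to recover the $\max\{1,(h/t)^{\alpha(\delta-1)/2}\}$ factor is exactly how the two cases emerge.
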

\begin{remark}\label{rem: 3}
The estimate in Theorem \ref{sup-conv-1} suggests that one can achieve a higher convergence rate for $\nabla(u_h-u)$ if an improved estimate of the error
$\nabla(R_hu-u)$ can be derived.  This could be achieved using a superconvergent recovery procedure of the gradient, which is possible on special meshes and  for solutions in $H^3(\Omega)$ for each $t\in (0,T]$. Examples of special meshes exhibiting superconvergence property are provided in  \cite{MN-87}. Therein, the authors  introduced an operator $G_h$ which postprocesses  $\nabla R_h u(t)$  with the following properties:
\begin{itemize}
\item[(i)] If $u(t)\in H^3(\Omega)$, then $ \|\nabla u(t)-G_h(R_h u)(t)\|\leq Ch^2\|u(t)\|_{H^3(\Omega)}.$
 \item[(ii)] For $\chi\in V_h$, we have $\|G_h(\chi)\|\leq C\|\nabla \chi\|.$
 \end{itemize}
Now,  if ${\cal T}_h$ is a triangulation of $\Omega$ such that these results are satisfied, then using
$$\|\nabla (u-u_h)(t)\|\le \|(\nabla u-G_h(R_h u))(t)\| +\|G_h(R_h u-u_h)(t)\|+ \|\nabla \theta(t)\|,$$  (i) and (ii), Theorem \ref{sup-conv-1}, and the inequality  $\|u(t)\|_{H^3(\Omega)} \le Ct^{-\alpha(3-\delta)/2}d_\delta(u_0,f)$ for $1/2 < \delta\leq 2$, it is clear that   the bound below holds for $t\in (0,T]$,
\[ \|\nabla(u-u_h)(t)\| \leq C h^{2} t^{-\alpha(3-\delta)/2}d_\delta(u_0,f), \qquad 1/2< \delta\leq 2\,. \quad \Box\]
\end{remark}

For $t\in (0,T]$, we show in the next theorem that the superconvergence result of $\nabla \theta$ in Theorem \ref{sup-conv-1} can be used to establish a  quasi-optimal (due to the presence of the logarithmic factor) convergence rate in the stronger $L^\infty(\Omega)$-norm.  Recall that, in the limiting case $\alpha=1$, the fractional diffusion problem \eqref{a} reduces to the classical diffusion equation. For $\delta=0,$ it is known that the logarithmic factor $\ell_h$  in this case is of order $2$, see \cite[Theorem 6.10]{thomee2006}, while it is of order $5/2$ in the theorem below. So, one can argue that the order of $\ell_h$ is not sharp.
\begin{theorem}\label{thm: smooth and nonsmooth-5}
  Let $u$ and $u_h$  be the solutions of $(\ref{a})$ (with $f\equiv 0$)  and $(\ref{semi})$, respectively, with $u_h(0)=P_h u_0$. Under the quasi-uniformity condition on $V_h$, for $t \in (0,T]$, we have
\[\|(u-u_h)(t)\|_{L^{\infty}(\Omega)} \leq  C h^2\ell_h^{5/2} t^{-\alpha(3-\delta)/2}\Big(\|u_0\|_\delta+\|u_0\|_{L^\infty(\Omega)}\Big)\quad {\rm for}~~0\le \delta\le 2.\]
\end{theorem}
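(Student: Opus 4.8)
The plan is to bound the $L^\infty$-norm of $e = u - u_h = \rho - \theta$ by treating the two pieces separately, in each case passing from an $L^2$-type estimate to an $L^\infty$-estimate at the cost of a logarithmic factor. For the projection part $\rho$, one invokes the standard maximum-norm estimate for the Ritz projection on quasi-uniform meshes, $\|\rho(t)\|_{L^\infty(\Omega)} \le C|\ln h|\, h^2 \|u(t)\|_{W^{2,\infty}(\Omega)}$ (or the $H^2$-based variant with one power of $|\ln h|$), together with the regularity bound for $\|u(t)\|$ in the appropriate norm coming from \eqref{eq: regularity property}; this already yields a contribution of the right form, $C|\ln h|\,h^2 t^{-\alpha(2-\delta)/2}(\|u_0\|_\delta + \|u_0\|_{L^\infty(\Omega)})$, which is dominated by the claimed bound since $t^{-\alpha(2-\delta)/2} \le C t^{-\alpha(3-\delta)/2}$ on $(0,T]$ (absorbing $T^{\alpha/2}$ into $C$). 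The substantive work is the FE-part $\theta$.

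For $\theta$, the key is the superconvergence estimate $\|\nabla\theta(t)\|^2 \le C h^4 t^{-\alpha(3-\delta)}d_\delta^2(u_0,f)$ from Theorem \ref{sup-conv-1}. On a quasi-uniform mesh one has the discrete Sobolev inequality $\|\chi\|_{L^\infty(\Omega)} \le C|\ln h|^{1/2}\,\|\nabla\chi\|$ for $\chi \in V_h$ (in $\mathbb{R}^2$); applying it to $\chi = \theta(t)$ would give $\|\theta(t)\|_{L^\infty(\Omega)} \le C|\ln h|^{1/2} h^2 t^{-\alpha(3-\delta)/2} d_\delta(u_0,f)$, which is already better than claimed in the power of $|\ln h|$. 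The reason the theorem states $|\ln h|^{5/2}$ rather than $|\ln h|^{1/2}$ is presumably that the cleanest route does not use $\nabla\theta$ directly but instead writes $\theta = (u_h - R_h u)$, inserts the postprocessing/gradient-recovery operator or a discrete Green's function argument, and the extra logarithmic powers accumulate from (i) the $L^\infty$ bound on the discrete Green's function, (ii) an $|\ln h|$ from the maximum-norm Ritz estimate used en route, and (iii) the discrete Sobolev embedding; alternatively, one argues via $\|e(t)\|_{L^\infty} \le \|e(t)\| + \text{(oscillation terms)}$ using the already-established $L^2$ bound $\|e(t)\| \le Ch^2 t^{-\alpha(2-\delta)/2}d_\delta(u_0,f)$ of Theorem \ref{thm: smooth and nonsmooth} together with an inverse inequality $\|\theta(t)\|_{L^\infty} \le C h^{-1}|\ln h|^{1/2}\|\theta(t)\| $, which is weaker and would not give $h^2$. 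I would therefore follow the gradient route: from Theorem \ref{sup-conv-1} and the discrete Sobolev inequality, control $\|\theta(t)\|_{L^\infty(\Omega)}$, and from the maximum-norm Ritz bound control $\|\rho(t)\|_{L^\infty(\Omega)}$, then add.

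More carefully, the anticipated chain is: first, use $\|e(t)\|_{L^\infty(\Omega)} \le \|\rho(t)\|_{L^\infty(\Omega)} + \|\theta(t)\|_{L^\infty(\Omega)}$; second, estimate $\|\rho(t)\|_{L^\infty(\Omega)} \le C|\ln h|\, h^2\, \|u(t)\|_{W^{2,\infty}(\Omega)}$ and bound $\|u(t)\|_{W^{2,\infty}(\Omega)}$ by $C t^{-\alpha(3-\delta)/2}(\|u_0\|_\delta + \|u_0\|_{L^\infty(\Omega)})$ using interior-type elliptic regularity fed by \eqref{eq: regularity property} applied with the smoothing index pushed up to accommodate the $L^\infty$ loss (this is where the exponent $3-\delta$ and the extra $\|u_0\|_{L^\infty}$ term enter, since $W^{2,\infty}$ control of $u$ requires slightly more than $\dot H^2$ regularity and one pays with $t^{-\alpha/2}$ more); third, estimate $\|\theta(t)\|_{L^\infty(\Omega)} \le C|\ln h|^{1/2}\|\nabla\theta(t)\|$ by the discrete Sobolev inequality and insert Theorem \ref{sup-conv-1}; finally collect the powers of $|\ln h|$, the worst aggregate being absorbed into $|\ln h|^{5/2}$, and the worst power of $t$ being $t^{-\alpha(3-\delta)/2}$. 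The main obstacle is the $\rho$-term: one must justify the $W^{2,\infty}$ (or $L^\infty$ Ritz) bound with the correct blow-up rate $t^{-\alpha(3-\delta)/2}$ and the appearance of $\|u_0\|_{L^\infty(\Omega)}$ — i.e. converting the Hilbert-scale regularity \eqref{eq: regularity property} into a pointwise-in-space statement without losing more than one power of $t^{-\alpha/2}$ — together with the careful bookkeeping of logarithmic factors from the nonsmooth-data maximum-norm FE theory; everything else (the discrete Sobolev inequality, the triangle inequality, and the algebra of exponents on $(0,T]$) is routine.
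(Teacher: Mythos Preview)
Your decomposition $e=\rho-\theta$ and your treatment of $\theta$ via the discrete Sobolev inequality combined with Theorem~\ref{sup-conv-1} are exactly what the paper does (the paper writes $|\ln h|$ rather than $|\ln h|^{1/2}$ in the discrete Sobolev constant, but either way this term contributes at most $|\ln h|$, so it is not the source of the exponent $5/2$).

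The genuine gap is in your handling of $\rho$. You propose to bound $\|\rho(t)\|_{L^\infty(\Omega)}\le C|\ln h|\,h^2\|u(t)\|_{W^{2,\infty}(\Omega)}$ and then to control $\|u(t)\|_{W^{2,\infty}(\Omega)}$ by $Ct^{-\alpha(3-\delta)/2}(\|u_0\|_\delta+\|u_0\|_{L^\infty(\Omega)})$ via ``interior-type elliptic regularity.'' But no such $W^{2,\infty}$ bound on $u(t)$ is available from the regularity results in the paper, and your account of where $|\ln h|^{5/2}$ comes from (Green's functions, postprocessing, accumulated constants) is speculation. In fact the exponent $5/2$ arises entirely from the $\rho$ estimate, through a $W^{2,p}$ argument with $p=|\ln h|$: one uses $\|\rho(t)\|_{L^\infty(\Omega)}\le C|\ln h|\,h^{2-2/p}\|u(t)\|_{W^{2,p}(\Omega)}$, then the elliptic estimate $\|u(t)\|_{W^{2,p}(\Omega)}\le Cp\|\mathcal{L}u(t)\|_{L^p(\Omega)}$ (the constant grows like $p$), and then the Sobolev embedding $\|v\|_{L^p(\Omega)}\le C\sqrt{p}\,\|\nabla v\|$ to return to $H^1$-based quantities that the regularity theory controls. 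This produces a factor $|\ln h|\cdot p^{3/2}$, and the choice $p=|\ln h|$ gives $|\ln h|^{5/2}$ while $h^{-2/p}$ stays bounded.

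The remaining ingredient you are missing is how to feed the equation into this chain. The paper uses $\mathcal{L}u(t)=\I^{1-\alpha}u'(t)$ (from \eqref{a} with $f\equiv 0$), so $\|\mathcal{L}u(t)\|_{L^p}\le C\sqrt{p}\,\I^{1-\alpha}\|u'(t)\|_1$, which works for $1<\delta\le 2$. For $0\le\delta\le 1$ one must first multiply by $t$ and use the Leibniz identity (Lemma~\ref{Ia}(a)) to write $\mathcal{L}(tu(t))=\I^{1-\alpha}\bigl[tu'(t)+(1-\alpha)(u(t)-u_0)\bigr]$; the term $u_0$ on the right is precisely what forces the appearance of $\|u_0\|_{L^p(\Omega)}\le \|u_0\|_{L^\infty(\Omega)}$ in the final bound. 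Without this $W^{2,p}$/$p=|\ln h|$ mechanism and the Leibniz-based identity for small $\delta$, the $\rho$ estimate in your proposal does not close.
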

\begin{proof} By the  Ritz projection error result  \cite[Equation (6.81)]{thomee2006}  and  the Agmon-Douglis-Nirenberg \cite{AgmonDouglisNirenberg1959} regularity estimate $\|\phi\|_{W^{2,p}(\Omega)}\le Cp\|\mathcal L \phi\|_{L^p(\Omega)}$ for $\phi \in W^{2,p}(\Omega) \cap H^1_0(\Omega)$ with $2\le p<\infty$, we have
\begin{equation}\label{eq: infty 1}
\|\rho(t)\|_{L^\infty(\Omega)}\leq C\ell_h h^{2-2/p}\|u(t)\|_{W^{2,p}(\Omega)} \leq C h^{2-2/p}\ell_h^{5/2}\, p \|\mathcal L u(t)\|_{L^p(\Omega)}\,.\end{equation}
{A time integration of  both sides of \eqref{a} ($f\equiv 0$), gives $\I u'(t)+{\mathcal L} (\I^\alpha u(t)-\I^\alpha u(0))=0.$ Since $\|u(t)\|\le C\|u_0\|$ (by \eqref{eq: regularity property}), $\|\I^\alpha u(t)\|\le C t^\alpha$ for $t>0$. Then  $\I^\alpha u(0)=0$ and so, $\I u'(t)+ \I^\alpha {\mathcal L} u(t)=0.$ Applying the operator $\I^{1-\alpha}$ to both sides,
\begin{equation}\label{eq: new identity}\I^{2-\alpha}u'(t)+\I {\mathcal L} u(t)=0.\end{equation}
 By  Lemma \ref{Ia} (a) (with ${\mathcal L}u$ in place of $\theta$) and \eqref{eq: new identity}, we have
 \begin{align*}  \I^{1-\alpha}(t\Ba  {\mathcal L} u(t))&=\I^{1-\alpha}(\Ba{\mathcal L} (tu(t)))-(1-\alpha)\I {\mathcal L} u(t)
 \\&=\I^{1-\alpha}\Big(\Ba{\mathcal L} (tu(t))+(1-\alpha)\I u(t)\Big). \end{align*}
 Using the identities $\I^{1-\alpha}(t\Ba  {\mathcal L} u(t))=-\I^{1-\alpha} (tu'(t))$ (follows from problem \eqref{a} with $f\equiv 0$)) and $\I^{1-\alpha}(\Ba{\mathcal L} (tu(t)))=\mathcal L (tu(t))$ (follows from \eqref{eq: theta2 identity} with ${\mathcal L} (tu(t))$ in place of $\Theta_2$), we find that
 \begin{align*}
   t{\mathcal L} u(t)&=-\I^{1-\alpha} \Big((tu'(t))+(1-\alpha) \I u'(t)\Big).\end{align*}}
Hence, by the embedding inequality ($\|v\|_{L^p(\Omega)}\leq C \sqrt{p} \|\nabla v\|$ for $v\in H^1_0(\Omega)$), the regularity property in \eqref{eq: regularity property-2}, and the property  $\|u(t)\|_{L^\infty(\Omega)}\le C\|u_0\|_{L^\infty(\Omega)}$, we have
 \begin{align*}
\|\mathcal L u(t)\|_{L^p(\Omega)}   &\le  C\sqrt{p}\I^{1-\alpha}(\|tu'(t)\|_1)+C\begin{cases}\I^{1-\alpha}(\|u(t)-u_0\|_{L^\infty(\Omega)}),~&0\le \delta\le 1\\    \sqrt{p}\I^{2-\alpha}(\|u'(t)\|_1),~&1<\delta\le 2\end{cases}\\
   &\le C\sqrt{p}\,t^{1-\alpha(3-\delta)/2}\Big(\|u_0\|_\delta+\|u_0\|_{L^\infty(\Omega)}\Big),\quad 0\le \delta\le 2\,.\end{align*}
for any $2\le p<\infty.$  Inserting the above bound in  \eqref{eq: infty 1} implies that
\[\|\rho(t)\|_{L^\infty(\Omega)}\le C h^{2-2/p}\ell_h^{5/2}\, p^{3/2} t^{-\alpha(3-\delta)/2}\Big(\|u_0\|_\delta+\|u_0\|_{L^\infty(\Omega)}\Big),\quad0\le  \delta\le 2.\]
On the other hand, by the discrete Sobolev inequality and the estimate in Theorem \ref{sup-conv-1},  we observe that
\[\|\theta(t)\|_{L^\infty(\Omega)}\leq C\ell_h^{1/2}\|\nabla \theta(t)\|\leq  C h^2\ell_h^{5/2} t^{-\alpha(3-\delta)/2}\|u_0\|_\delta\quad {\rm for}~~0\le \delta\le 2.\]
Finally, choose $p=|\ln h|$, and the desired convergence result follows then  from $\|(u_h-u)(t)\|_{L^\infty(\Omega)}\leq \|\theta(t)\|_{L^\infty(\Omega)}
+\|\rho(t)\|_{L^\infty(\Omega)}$, and the above two bounds.$\quad \Box$
\end{proof}
\begin{remark}\label{rem: non-zero f}
One can extend the achieved results in Theorem \ref{thm: smooth and nonsmooth-5} to the case of non-zero source term $f$, assuming some regualrity assumptions such as $\I^{1-\alpha} (t\|f(t)\|_1)\le C$ and $\I(\|f(t)\|_{L^\infty(\Omega)})\le C.$
\end{remark}

\section{Numerical results}\label{sec: Numerical}
In this section, we focus on testing the achieved theoretical convergence results in Theorem \ref{thm: smooth and nonsmooth-5}. For the numerical illustration of the error bounds in Theorems  \ref{thm: smooth and nonsmooth} and \ref{thm: H1 bound}, one can follow the convention in \cite[Section 6]{JLZ2013}. To this end, we choose ${\cal{L}} = -\nabla^2$, $f\equiv 0$, $\alpha=0.75$,  $T=0.5$,  and $\Omega=(0,1)\times (0,1)$ in problem  \eqref{a}. The orthonormal eigenfunctions and corresponding eigenvalues of~${\cal{L}}$ are
\[\phi_{mn}(x,y)=2\sin(m \pi x)\sin(n \pi y) \quad\text{and}\quad \lambda_{mn}=(m^2+n^2)\pi^2\quad{\rm for}~~ m\,,n=1, 2, \ldots.\]
Separation of variables yields the series representation solution of problem \eqref{a}:
\begin{equation}\label{eq: u series}
u(x,y,t)=\sum_{m,n=1}^\infty (u_0, \phi_{mn}) E_{\alpha}(-\lambda_{mn} t^{\alpha})\phi_{mn}(x,y),
\end{equation}
where $E_{\alpha}(t):=\sum_{p=0}^\infty\frac{t^p}{\Gamma(\alpha p+1)}$ is the Mittag-Leffler function.

To compute the semidiscrete solution $u_h$, we discretize in time by the mean of generalized Crank-Nicolson   scheme \cite{Mustapha2011}, this will then define the following scheme:
 \begin{equation*}
\tau_n^{-1}(u_h^n-u_h^{n-1},v_h)+ A(\I^\alpha \bar u_h(t_n)-\I^\alpha \bar u_h(t_{n-1}),v_h)= 0\quad \forall v_h\in V_h,
\end{equation*}
for $1\le n\le N$,  where $N$ is the number of time mesh subintervals ($0=t_0<t_1<\ldots<t_N=T$), $\tau_n$ is the $n$th time step size. Here
$u_h^n \approx u_h(t_n)$ and $\bar u_h(s)=\frac{1}{2}(u_h^j+u_h^{j-1})$ when $s \in (t_{j-1},t_j)$ for $j\ge 2,$ while $\bar u_h(s)=u_h^1$ on the subinterval $(0,t_1).$ The modification on the first subinterval ensures that $\bar u_h$  does not depend on $u_h^0$  which is necessary for our numerical scheme in cases when $u_0$ is not sufficiently regular.

Following the convergence analysis in \cite{Mustapha2011}, we concentrate the time step near $t=0$ to compensate for the singular behaviour of the solution $u$ of problem \eqref{a}. So, we let $t_n=(n/N)^\gamma T$ for some fixed $\gamma\ge 1$ that will be chosen appropriately.  For the spatial partition of $\Omega$, let $\cT_h$ be a family of uniform  triangular meshes   with diameter ~$h=\sqrt{2}/M$ obtained from uniform $M$-by-$M$ square meshes by cutting each mesh square into two triangles. For measuring the error  at each time node $t_n$, we let ${\mathcal N}_h$ be the set of all triangular nodes of the mesh family  $\cT_{h_s}$ where the diameter $h_s$ is half the diameter of the finest mesh $\cT_h$ in  our spatial iterations, for instance, $h_s=\sqrt{2}/128$ in Tables \ref{table 1}--\ref{table 3} as well as in Figures \ref{fig1}--\ref{fig3}. To measure the errors, define the discrete-space maximum norm: $|\|v\||:=\max\{|v({\bf x})|,~{\bf x}\in {\mathcal N}_h\}\,.$ Thus,  for large values of $M$,  $|\|u_h^n-u(t_n)\||$ approximates the error $\|u_h^n-u(t_n)\|_{L^{\infty}(\Omega)}$.

\begin{table}
\begin{center}
\begin{tabular}{|c|cc|}
\hline
$M$& $Error$& $CR$\\
\hline
 4& 1.2759e-02&       \\
 8& 3.3749e-03& 1.9186\\
16& 8.7940e-04& 1.9402\\
32& 2.2284e-04& 1.9805\\
64& 5.6414e-05& 1.9819\\
\hline
\end{tabular}
\caption{Behavior of the uniform error $\max_{n=1}^N  |\|u_h^n-u(t_n)\||$ and the associated convergence rates as the number of spatial mesh elements increases. In each case, we use $1000$  time subintervals.}
\label{table 1}
\end{center}
\end{table}
\begin{figure}
\begin{center}
\includegraphics[width=10cm, height=6cm]{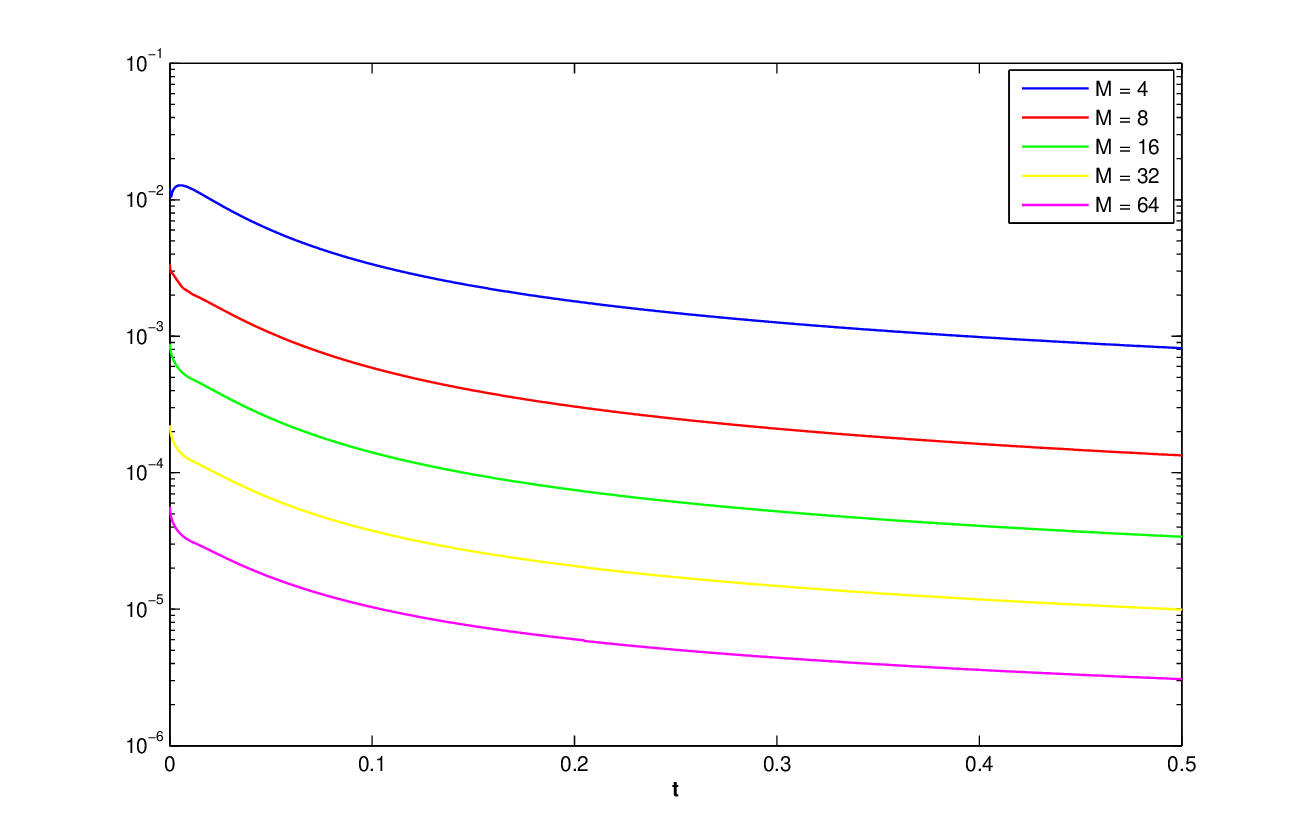}
\caption{The error $|\|u_h^n-u(t_n)\||$ as a function of~$t_n$ for Example 1.}
\label{fig1}
\end{center}
\end{figure}

In  {\bf Examples 1-3}, we choose $\gamma=1.6$ and  refine the time steps so that the spatial errors are dominant. We evaluate the exact solution $u$ of problem \eqref{a} by truncating the Fourier series in \eqref{eq: u series} after $60$ terms.

{\bf Example 1.}  Choose  $u_0(x,y)=xy(1-x)(1-y)$. The Fourier sine coefficients are:
\[(u_0,\phi_{mn})=8(1-(-1)^m)(1-(-1)^n)(mn\pi^2)^{-3},\quad {\rm for}~~m,n=1,\,2,\ldots.\]
The initial data  $u_0 \in \dot H^2(\Omega)\cap L^\infty(\Omega).$ Thus, by Theorem \ref{thm: smooth and nonsmooth-5} ($\delta=2$), for each time step $t_n$,  we expect convergence rate of order $h^2 \ell_h^{5/2}t_n^{-\alpha/2}$ in the  $L^\infty(\Omega)$-norm. Figure \ref{fig1}  shows how the error varies with $t$ for a sequence of solutions obtained by successively doubling the spatial mesh elements, using a log scale. (The same time mesh with $N=1000$ subintervals  was used in all cases). In Table \ref{table 1}, we listed the time-space maximum error and its associated convergence rate ($CR$), where second order optimal convergence rates was observed (ignoring the logarithmic factors). So, the  influence of the  coefficient $t_n^{-\alpha/2}=t_n^{-3/8}$ is absent. This is probably due to the fact the $u_0$   belongs to the smoother space ${\mathcal C}^2(\overline \Omega)\cap {\mathcal C}_0(\overline \Omega)$, where an $O(h^2\ell_h^2)$ rate of convergence is expected, \cite[Theorem 4.2]{MT2010b}.
\begin{table}
\begin{center}
\begin{tabular}{|c|cc|cc|cc|cc|}
\hline
$M$&\multicolumn{2}{c|}{$\mu=0$ }
&\multicolumn{2}{c|}{$\mu=0.25$}
&\multicolumn{2}{c|}{$\mu=0.5$}&\multicolumn{2}{c|}{$\mu=75$}\\
\hline
 4& 3.008e-02& & 9.521e-03&& 3.610e-03&& 1.597e-03&\\
 8& 1.054e-02&  1.513  & 1.412e-03& 2.754& 5.342e-04& 2.757& 2.401e-04& 2.734\\
16& 5.441e-03&  0.954  & 4.112e-04& 1.779& 1.279e-04& 2.062& 5.678e-05& 2.080\\
32& 1.876e-03&  1.536  & 1.391e-04& 1.564& 3.344e-05& 1.936& 1.513e-05& 1.908\\
64& 8.667e-04&  1.114  & 6.425e-05& 1.114& 8.598e-06& 1.959& 4.055e-06& 1.900\\
\hline
\end{tabular}
\caption{The weighted error $E_\mu$ and the convergence rates, as the number of spatial mesh elements increases, for different choices of the
power  exponent $\mu$. In each case, we use $1300$  time subintervals. }
\label{table 2}
\end{center}
\end{table}
\begin{figure}
\begin{center}
\includegraphics[width=10cm, height=6cm]{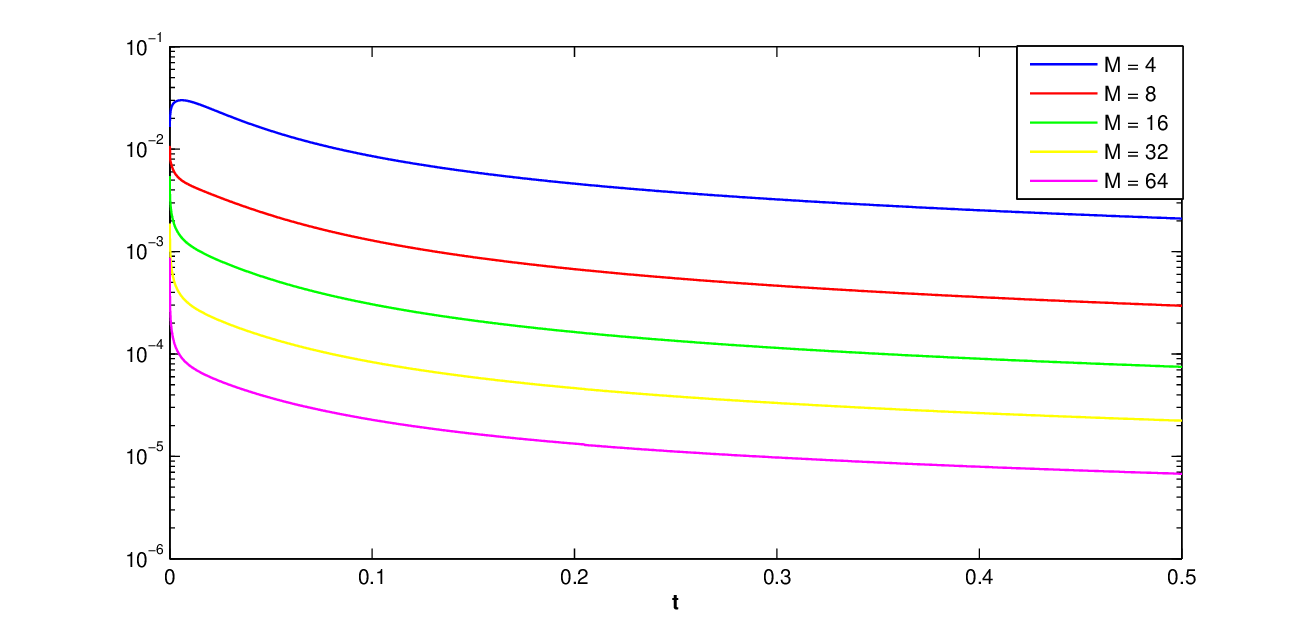}
\caption{The error $|\|u_h^n-u(t_n)\||$ as a function of~$t_n$ for Example 2.}
\label{fig2}
\end{center}
\end{figure}

\begin{table}
\begin{center}
\begin{tabular}{|c|cc|cc|cc|cc|}
\hline
$M$&\multicolumn{2}{c|}{$\mu=0$ }
&\multicolumn{2}{c|}{$\mu=0.5$}
&\multicolumn{2}{c|}{$\mu=0.75$}&\multicolumn{2}{c|}{$\mu=1$}\\
\hline
 8& 9.7501e-01&& 8.545e-03&& 2.245e-03&& 1.525e-03&\\
16& 7.0054e-01&  0.4769& 3.852e-03& 1.150& 6.809e-04& 1.721& 4.783e-04& 1.672\\
32& 3.2311e-01&  1.1164& 1.776e-03& 1.117& 2.000e-04& 1.767& 1.442e-04& 1.730\\
64& 1.5301e-01&  1.0783& 8.409e-04& 1.078& 6.234e-05& 1.682& 4.945e-05& 1.544\\
\hline
\end{tabular}
\caption{The weighted error $E_\mu$ and the convergence rates, as the number of spatial mesh elements increases, for different choices of the
power weight exponent $\mu$. In each case, we use $1300$  time subintervals. }
\label{table 3}
\end{center}
\end{table}

{\bf Example 2.}  Choose  $u_0(x,y)=g(x)g(y)$ where $g(z)=z$ on $[0,1/2)$ while $g(z)=1-z$ on  $(1/2,1]$, which is less smooth, then the considered $u_0$ in the previous example. One can verify that $u_0$ has the Fourier sine coefficients $(u_0,\phi_{mn})=2(1-(-1)^m)(1-(-1)^n)(mn\pi^2)^{-2}(-1)^{mn}$ for $m,n=1,\,2,\ldots.$ The function $u_0 \in \dot H^{1+\epsilon}(\Omega)$ for $0\le \epsilon<1/2$. So, by Theorem \ref{thm: smooth and nonsmooth-5} ($\delta<1.5$), for each $t_n$, we expect $O(h^2\ell_h^{5/2})t_n^{-3\alpha/4}$ convergence rates in the  $L^\infty(\Omega)$-norm. As in Figure \ref{fig1},  Figure \ref{fig2} shows how the error varies with $t$  for a sequence of solutions obtained by doubling the spatial mesh elements. (The time mesh with $N=1300$ subintervals  was used in all cases). Table \ref{table 2} provides an alternative view of this data, listing the time-space maximum weighted error $E_\mu:=\max_{n=1}^N t_n^\mu |\|u_h^n-u(t_n)\||$  and its associated convergence rate $CR$. As expected, ignoring the logarithmic factors, the convergence rate is $2$ when $\mu \ge 3\alpha/4 \approx 0.56$, but  the rate deteriorates for smaller values of $\mu$ (relatively far from $3\alpha/4$).

{\bf Example 3.}  Choose $u_0(x,y)=1$, and so $u_0$ has the Fourier sine coefficients $(u_0,\phi_{mn})=2(1-(-1)^m)(1-(-1)^n)(mn\pi^2)^{-1}$ for $m,n=1,\,2,\ldots.$ The initial data function  $u_0 \in \dot H^{\epsilon}(\Omega)\cap L^\infty(\Omega)$ for $0\le \epsilon<1/2$. As in the previous example, Figure \ref{fig3} shows a consistent decaying in the errors by doubling the number of spatial mesh elements. Another observation is the large impact of the very limited regularity of $u_0$ on the errors near $t=0$ in this example. For better justifications of this, see Table \ref{table 3} where the difference between the maximum error $E_0$ and the weighted error $E_1$ is very substantial, we also observed very good improvements in the convergence rates $CR$, but not yet optimal due to the time discretization.

\begin{figure}
\begin{center}
\includegraphics[width=10cm, height=6cm]{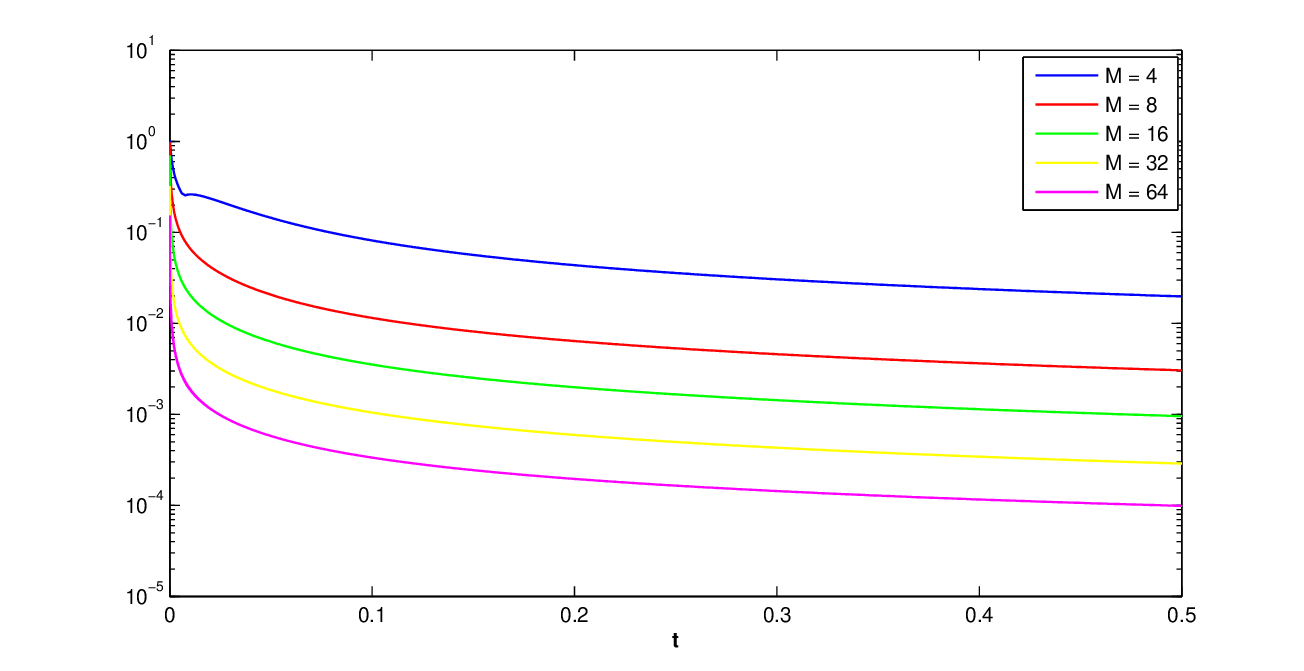}
\caption{The error $|\|u_h^n-u(t_n)\||$ as a function of~$t_n$ for Example 3.}
\label{fig3}
\end{center}
\end{figure}


\begin{thebibliography}{}
\bibitem{AgmonDouglisNirenberg1959} S. Agmon, A. Douglis, and L. Nirenberg, Estimates near the boundary for
solutions of elliptic partial differential equations satisfying general boundary conditions, {\em Comm. Pure. Appl. Math.}, 12 (1959), 623--727.

\bibitem{CockburnMustapha2015} B. Cockburn  and K. Mustapha, A hybridizable discontinuous Galerkin  method  for fractional diffusion problems, {\em Numer. Math.},  130 (2015)  293--314.

\bibitem{ChenLiuAnhTurner2012} C. M. Chen, F. Liu, V. Anh and I. Turner, Numerical methods for solving a two-dimensional variable-order anomalous sub-diffusion equation, {\em Math. Comput.},  81 (2012), 345--366.

\bibitem{CuestaLubichPalencia2006}  E. Cuesta, C. Lubich and  C. Palencia, Convolution quadrature time discretization of fractional diffusive-wave equations,
{\em Math. Comput.}, 75 (2006), 673--696.

\bibitem{Cui2012} M. Cui, Compact alternating direction implicit method for two-dimensional time fractional diffusion equation,
{\em J. Comput. Phys.},  231  (2012), 2621--2633.

\bibitem{GP2011}  D. Goswami and A. K. Pani, An alternate approach to optimal $L^2$-error analysis of semidiscrete Galerkin methods for
linear parabolic problems with nonsmooth initial data, {\em Numer. Funct. Anal. Optim.}, 32 (2011), 946--982.

\bibitem{JLZ2013}  B. Jin, R. Lazarov and Z. Zhou, Error estimates for a semidiscrete finite element method for fractional order parabolic equations,
{\em SIAM J. Numer. Anal.},  51  (2013),  445-–466.

\bibitem{JLPZ2015}  B. Jin, R. Lazarov, J. Pascal and Z. Zhou, Error analysis of semidiscrete finite element methods for inhomogeneous time-fractional diffusion, {\em IMA J. Numer. Anal.},  35  (2015),  561–-582.

\bibitem{KMP2017}   S. Karaa, K. Mustapha and A. K. Pani, Finite volume element method for two-dimensional fractional subdiffusion problems, {\em IMA J. Numer. Anal.}, 37 (2017), 945--964.

\bibitem{MN-87} M. K{\v r}\`i{\v z}ek  and P. Neittaanm\"aki, On a global superconvergence of the gradient of linear triangular elements,
{\em J. Comput. Appl. Math.} 18 (1987),  221–-233.

\bibitem{Mclean2010} W. McLean, Regularity of solutions to a time-fractional diffusion equation, {\em ANZIAM J.},  52 (2010), 123--138.

\bibitem{McLean2012} W. McLean, Fast summation by interval clustering for an evolution equation with memory, {\em SIAM J. Sci. Comput.}, 34 (2012), 3039--3056.

\bibitem{McLeanMustapha2015} {W. McLean and  K. Mustapha}, Time-stepping error bounds for fractional diffusion problems with non-smooth initial data, {\em J. Comput. Phys.}, 293 (2015),  201--217.

\bibitem{MT2010a}  W. McLean and V. Thom\'ee, Numerical solution via Laplace transforms of a fractional order evolution equation,
{\em J. Integral Equations Appl.}, 22 (2010),  57-–94.

\bibitem{MT2010b} W. McLean and V. Thom\'ee, Maximum-norm error analysis of a numerical solution via Laplace transformation and
quadrature of a fractional order evolution equation, {\em IMA J. Numer. Anal.}, 30 (2010), 208--230.

\bibitem{Mustapha2011} K. Mustapha, An implicit finite difference time-stepping method for a sub-diffusion equation, with spatial discretization by finite elements, {\em  IMA J. Numer. Anal.}, 31 (2011), 719--739.

\bibitem{MustaphaMcLean2011} K. Mustapha and  W. McLean, Piecewise-linear, discontinuous Galerkin method for a fractional diffusion equation,
{\em Numer. Algor.}, {56} (2011), 159--184.

\bibitem{MustaphaSchoetzau2014} K. Mustapha and  D. Sch\"otzau, Well-posedness of $hp-$version discontinuous Galerkin methods for fractional diffusion wave equations, {\em  IMA J. Numer. Anal.}, 34 (2014), 1226--1246.

\bibitem{RZ2013}  J. Ren and Z. Z. Sun, Numerical algorithm with high spatial accuracy for the fractional diffusion-wave equation with Neumann boundary conditions, {\em  J. Sci. Comput.},  56  (2013),    381--408.

\bibitem{thomee2006} V. Thom\'ee. Galerkin finite element methods for parabolic problems, Springer, Second edition, 2006.

\bibitem{ZS2011}  Y. N. Zhang and Z. Z. Sun, Alternating direction implicit schemes for the two-dimensional fractional sub-diffusion equation,
{\em J. Comput. Phys.}, 230 (2011), 8713--8728.
\end{thebibliography}
\end{document}